\documentclass[11pt]{amsart}
\usepackage{amsbsy}
\usepackage{amsmath}
\usepackage{amsfonts}
\usepackage{graphicx,epsfig,subfigure,psfrag}

\usepackage{color}

\usepackage{changes}

\begin{document}

\newtheorem{theorem}{Theorem}[section]
\newtheorem{proposition}[theorem]{Proposition}
\newtheorem{lemma}[theorem]{Lemma}
\newtheorem{corollary}[theorem]{Corollary}
\newtheorem{definition}[theorem]{Definition}
\newtheorem{remark}[theorem]{Remark}

\newcommand{\tex}{\textstyle}
\numberwithin{equation}{section}
\newcommand{\ren}{\mathbb{R}^N}
\newcommand{\re}{\mathbb{R}}
\newcommand{\n}{\nabla}
\newcommand{\p}{\partial}
\newcommand{\iy}{\infty}
\newcommand{\pa}{\partial}
\newcommand{\fp}{\noindent}
\newcommand{\ms}{\medskip\vskip-.1cm}
\newcommand{\mpb}{\medskip}
\newcommand{\AAA}{{\bf A}}
\newcommand{\BB}{{\bf B}}
\newcommand{\CC}{{\bf C}}
\newcommand{\DD}{{\bf D}}
\newcommand{\EE}{{\bf E}}
\newcommand{\FF}{{\bf F}}
\newcommand{\GG}{{\bf G}}
\newcommand{\oo}{{\mathbf \omega}}
\newcommand{\Am}{{\bf A}_{2m}}
\newcommand{\CCC}{{\mathbf  C}}
\newcommand{\II}{{\mathrm{Im}}\,}
\newcommand{\RR}{{\mathrm{Re}}\,}
\newcommand{\eee}{{\mathrm  e}}
\newcommand{\LL}{L^2_\rho(\ren)}
\newcommand{\LLL}{L^2_{\rho^*}(\ren)}
\renewcommand{\a}{\alpha}
\renewcommand{\b}{\beta}
\newcommand{\g}{\gamma}
\newcommand{\G}{\Gamma}
\renewcommand{\d}{\delta}
\newcommand{\D}{\Delta}
\newcommand{\e}{\varepsilon}
\newcommand{\var}{\varphi}
\newcommand{\lll}{\l}
\renewcommand{\l}{\lambda}
\renewcommand{\o}{\omega}
\renewcommand{\O}{\Omega}
\newcommand{\s}{\sigma}
\renewcommand{\t}{\tau}
\renewcommand{\th}{\theta}
\newcommand{\z}{\zeta}
\newcommand{\wx}{\widetilde x}
\newcommand{\wt}{\widetilde t}
\newcommand{\noi}{\noindent}
\newcommand{\uu}{{\bf u}}
\newcommand{\xx}{{\bf x}}
\newcommand{\yy}{{\bf y}}
\newcommand{\zz}{{\bf z}}
\newcommand{\aaa}{{\bf a}}
\newcommand{\cc}{{\bf c}}
\newcommand{\jj}{{\bf j}}
\newcommand{\ggg}{{\bf g}}
\newcommand{\UU}{{\bf U}}
\newcommand{\YY}{{\bf Y}}
\newcommand{\HH}{{\bf H}}
\newcommand{\GGG}{{\bf G}}
\newcommand{\VV}{{\bf V}}
\newcommand{\ww}{{\bf w}}
\newcommand{\vv}{{\bf v}}
\newcommand{\hh}{{\bf h}}
\newcommand{\di}{{\rm div}\,}
\newcommand{\ii}{{\rm i}\,}
\def\I{{\mathbf{I}}}
\newcommand{\inA}{\quad \mbox{in} \quad \ren \times \re_+}
\newcommand{\inB}{\quad \mbox{in} \quad}
\newcommand{\inC}{\quad \mbox{in} \quad \re \times \re_+}
\newcommand{\inD}{\quad \mbox{in} \quad \re}
\newcommand{\forA}{\quad \mbox{for} \quad}
\newcommand{\whereA}{,\quad \mbox{where} \quad}
\newcommand{\asA}{\quad \mbox{as} \quad}
\newcommand{\andA}{\quad \mbox{and} \quad}
\newcommand{\withA}{,\quad \mbox{with} \quad}
\newcommand{\orA}{,\quad \mbox{or} \quad}
\newcommand{\atA}{\quad \mbox{at} \quad}
\newcommand{\onA}{\quad \mbox{on} \quad}
\newcommand{\ef}{\eqref}
\newcommand{\mc}{\mathcal}
\newcommand{\mf}{\mathfrak}

\newcommand{\ssk}{\smallskip}
\newcommand{\LongA}{\quad \rightarrow \quad}
\def\com#1{\fbox{\parbox{6in}{\texttt{#1}}}}
\def\N{{\mathbb N}}
\def\A{{\cal A}}
\newcommand{\de}{\,d}
\newcommand{\eps}{\varepsilon}
\newcommand{\be}{\begin{equation}}
\newcommand{\ee}{\end{equation}}
\newcommand{\spt}{{\mbox spt}}
\newcommand{\ind}{{\mbox ind}}
\newcommand{\supp}{{\mbox supp}}
\newcommand{\dip}{\displaystyle}
\newcommand{\prt}{\partial}
\renewcommand{\theequation}{\thesection.\arabic{equation}}
\renewcommand{\baselinestretch}{1.1}
\newcommand{\Dm}{(-\D)^m}


 \title[Effect of permeability in interface problems]{\bf The effect of the permeability in interface problems: impermeable barriers and free diffusion}

\author{Pablo \'Alvarez-Caudevilla, Cristina Br\"{a}ndle, M\'onica Molina-Becerra and Antonio Su\'arez}

\address{Departamento de Matem\'aticas, Universidad
Carlos III de Madrid, Legan\'es, Madrid, Spain} \email{pacaudev@math.uc3m.es, cbrandle@math.uc3m.es}
\address{Dpto. Matemática Aplicada II, Escuela Politécnica Superior and IMUS, Univ. de Sevilla, C/ Virgen de África, 7, 41011 Sevilla, Spain} \email{monica@us.es}
\address{Dpto. Ecuaciones Diferenciales y Análisis Numérico and IMUS, Univ. de Sevilla, Avda. Reina Mercedes s/n, 41012, Sevilla, Spain}
\email{suarez@us.es}
\keywords{Interface problems, coupled systems, membrane regions interchange of
flux}

\subjclass{35J70, 35J47, 35K57}

\thanks{This research was supported by the Ministry of Science, Innovation and Universities of Spain. PAC and CB were supported by grant PID2025-172146NB-I00, and MMB and AS were supported by grant PID2023-149509NB-I00. AS also has been supported by University of Sevilla under grants SOL2024-31708 and SOL2024-
31596, and  IMUS-María de Maeztu grant CEX2024-001517-M - Apoyo
a Unidades de Excelencia María de Maeztu, funded by MICIU/
AEI/ 10.13039/501100011033.
}

\date{\today}

\parskip5pt

\begin{abstract}
Focusing on reaction-diffusion models with an interface region dividing the domain, we study the stability of systems as the permeability parameter approaches zero or infinity under Kedem-Katchalsky boundary conditions imposed on the membrane or interface region. These boundary conditions establish that the flux across the interface manifold is proportional to the density jump across that region, with the constant of proportionality defined as the permeability parameter. We analyze both limiting behaviors: when permeability vanishes and when infinite flux is permitted. This analysis is carried out for both the well-known symmetric case, where permeability is identical in both directions across the membrane, and the asymmetric case, where it depends on the direction of the flux. For the asymmetric case, we obtain novel and surprising results that reveal a richer variety of behaviors in this class of interface problems and that differ from the scalar models.
\end{abstract}

\maketitle

\section{Introduction}

In many physical or biological systems modeled by differential equations, the domain of definition $\Omega\subset\mathbb{R}^N$, $N\geq 1$ may be divided by an interface or membrane $\Sigma\subset\Omega$, that splits $\Omega$ into two subsets $\Omega_1$ and $\Omega_2$ such that
\begin{equation}
\label{eq:domain}
\Omega=\Omega_1\cup \Omega_2 \cup \Sigma.
\end{equation}
Typically,
the solution for such a differential equation, or its derivatives, may have discontinuities across $\Sigma=\partial \Omega_1$, see
Figure \ref{fig:domain} where we have illustrated an example of $\Omega$ showing this configuration. This is the case, for instance, of reaction–diffusion models for protein cellular signaling, where all interactions
occur in a cell divided into two compartments, the nucleus and the cytoplasm, connected by the semipermeable
nuclear membrane and bounded by the impermeable cell membrane,~\cite{EliasClairambault2014}. The interaction of species throughout
the nuclear membrane is modeled by
the so-called Kedem–Katchalsky boundary conditions,~\cite{KedemKatchalsky1961}.  More precisely, if $u_1$ and $u_2$ denote the density of the two species, the export/import of the nuclear species and export/import of the cytoplasmic species is modeled by
$$
     \dfrac{\p u_1}{\p {\bf \nu}_1}= \gamma(u_2- u_1),\quad \dfrac{\p u_2}{\p {\bf \nu}_2}= \gamma( u_1- u_2), \qquad \text{on } \Sigma,
$$
where ${\bf \nu}_i$ is the unitary outward normal vector to $\Sigma$, which points outward with respect to $\Omega_i$, $i=1,2$,  and $\gamma>0$ is a real parameter, called the membrane permeability constant.
Therefore,
the smaller $\gamma$ is, the stronger the membrane is and, consequently, the lower the permeability is.

\begin{figure}[ht!]
    \centering
    \includegraphics[width=0.4\linewidth]{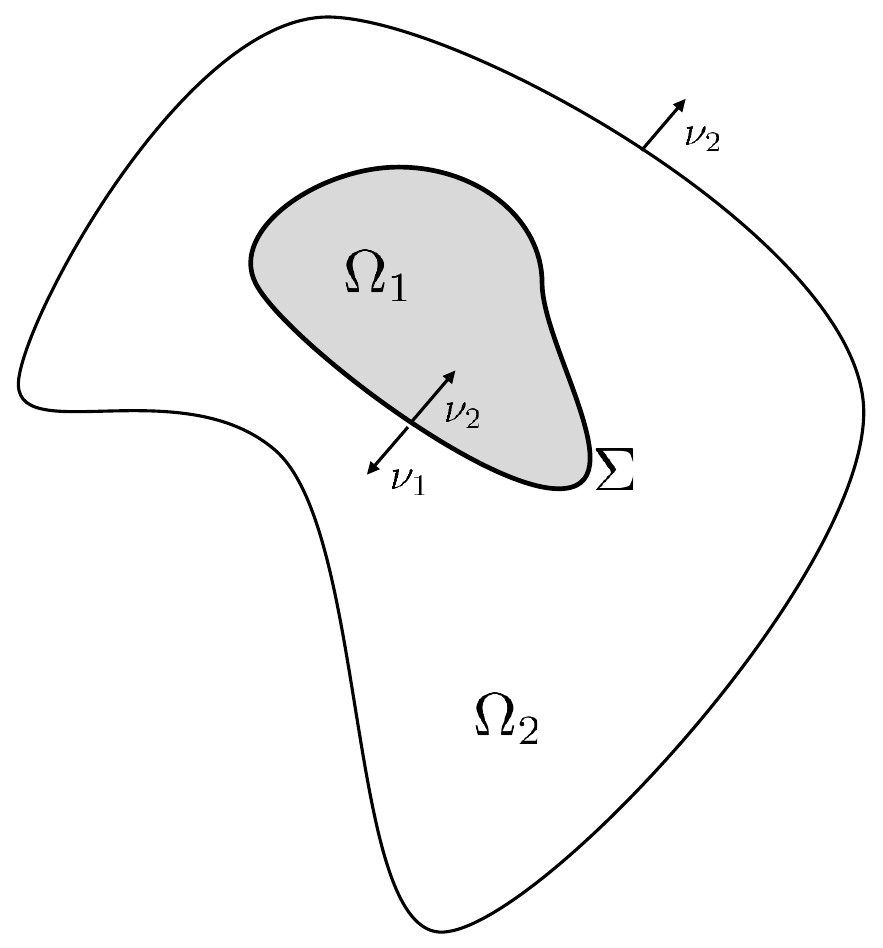}
    \caption{A possible configuration of the domain}
    \label{fig:domain}
\end{figure}

If the permeability of an interface depends on the direction of movement, that is, the interface is not symmetric,  the previous condition should be replaced by
$$
\frac{\partial u_1}{\partial \nu_1}=\gamma_1\left(u_2-u_1\right), \quad \frac{\partial u_2}{\partial \nu_2}=\gamma_2\left(u_1-u_2\right), \quad \text { on } \Sigma .
$$
in which $\gamma_i>0$ is the transmissivity of the membrane from $\Omega_i$ to the other subdomain.

 Interface and transmission problems of this type arise in a wide variety of models involving diffusion across internal barriers, including transport through biological membranes, cell and tissue compartmentalization, tumor invasion and spatial population dynamics. From the mathematical point of view, a growing literature has addressed coupled models through permeable interfaces, studying questions such as existence and uniqueness of solutions,
 spectral properties and the effect of asymmetric transmission conditions; see, for instance, \cite{CiavolellaPerthame2021,Ciavolella2022,Suarezetal,MaiaMolinaMoralesSuarez2024} and the references therein. More recently, interface transmission has also been considered in the context of intracellular compartmentalization and singularly perturbed cellular domains \cite{Bressloff2024}, while a general theory of renormalised solutions for reaction--diffusion systems coupled through nonlinear interface conditions has been developed in \cite{HopfTang2026}. Altogether, these works show that an internal membrane is not merely a perturbation of the exterior boundary conditions, but may substantially modify the spectral and qualitative properties of the underlying diffusion problem.
 In that sense, asymmetric conditions arise naturally in a variety of applied problems, for instance, individuals encountering the boundary between two different habitats may have different tendencies to move between them, as a consequence of habitat preference,  \cite{OvaskainenCornell2003}.

Furthermore, a fundamental step towards understanding the corresponding nonlinear models is to first characterize the spectral properties of associated linear problems. In particular, the behavior of their principal eigenvalue and eigenfunctions plays a key role in determining the qualitative properties of the solutions. Traditionally, those qualitative properties are obtained in terms of a parameter acting on the interior of the subdomains. In particular, if one assumes a logistic model such as $-\Delta u_i=\lambda u_i-au_i^2$ there are several works where the effect of the interface region is analyzed in terms of the growth rate $\lambda$; see, for instance, \cite{Suarezetal,WangSu} for further details. In fact, some interesting  behaviors, different from the traditional problems without interface regions, have also been obtained in \cite{PAC-Bran-Mol-Sua}
 showing that the existence of an interface region as the one we are considering here might change the way the solutions behave.
From those previous works, a natural question that arises here is the understanding of how the system behaves in terms of the permeability effect. In other words, changing the perspective and considering the permeability parameter as the main parameter of the system. In that sense, we would also like to answer such a question while assuming an asymmetric interface region. Thus, one of the aims of this paper is to provide answers to the understanding of the behavior of a system
when the permeability of the membrane is varied in only one direction, while the transmission properties in the opposite direction remain fixed.

Under these assumptions, and since the behavior of nonlinear problems is ascertained through the analysis of specific associated linear problems, given the domain configuration~\eqref{eq:domain} we focus  on the interface
eigenvalue problem given by
\begin{equation}
    \label{eq:main.system}
 \left\{\begin{array}{ll}
(-\Delta+c_1(x)) \varphi_{1} =\Lambda \varphi_1\quad   &\hbox{in } \Omega_1,\\
(-\Delta+c_2(x)) \varphi_{2} =\Lambda \varphi_2\quad   &\hbox{in } \Omega_2,
\end{array}\right.
\end{equation}
 where we assume that  $c_i\in { C^{0,\alpha}(\overline\Omega_i)}$ for some $0<\alpha<1$ and $\Lambda\in\mathbb{R}$. Together with the system, we consider boundary conditions given by an asymmetric Kedem-Katchalsky condition on the interface $\Sigma$ and a zero flux condition on the outside boundary of the domain. If we denote $\nu:=\nu_1=-\nu_2$ on $\Sigma$ de boundary conditions read:
\begin{equation}
    \label{eq:main.bc} \dfrac{\p u_1}{\p {\bf \nu}}= \gamma_1(u_2- u_1),\quad \dfrac{\p u_2}{\p {\bf \nu}}= \gamma_2( u_2- u_1), \qquad \text{on } \Sigma;
\qquad \partial_{\nu_2} \varphi_2=0, \qquad \text{on } \Gamma:=\partial\Omega.
\end{equation}

This article aims to establish how the principal eigenvalue of an elliptic interface problem depends on the permeability of the membrane, keeping in mind that this dependence determines the existence and behavior of positive solutions of the associated nonlinear problem. More precisely, we will provide a characterization of the two extreme regimes in which the membrane either prevents individuals from crossing in one direction, $\gamma_1\to0$, or becomes increasingly permeable in that direction, $\gamma_1\to\infty$, while $\gamma_2$ remains fixed. The special case in which the interface is symmetric, $\gamma:=\gamma_1=\gamma_2$ will also be covered.

The two limiting regimes have a clear interpretation in terms of the strength of the membrane, but, as we shall see, their effect strongly depends on whether permeability is symmetric or not. In the symmetric case, when $\gamma\to0$, the two subdomains become uncoupled, and the limiting principal eigenvalue selects the more favorable one {(Theorem~\ref{thm:symmetric.0})}. At the opposite extreme, when $\gamma\to+\infty$, the jump of the eigenfunction across the interface vanishes, and the membrane disappears in the limit, recovering the principal eigenvalue problem in the whole domain with the piecewise-defined potential, {see Theorem~\ref{thm:symmetric.infty}. These two limiting behaviors have also been recently obtained in~\cite{WangSu-preprint} in the framework of a general spectral theory for interface transmission problems. In fact, their results describe the convergence of the whole sequence of eigenvalues, being the mentioned symmetric principal eigenvalue limits  (Theorem~\ref{thm:symmetric.0} and Theorem~\ref{thm:symmetric.infty})  particular cases of their Theorems~2.4 and~2.5. However, the asymptotic behavior of the associated eigenfunctions is not explicitly characterized there.}  The asymmetric case is substantially different. If only one permeability coefficient tends to zero or infinity while the other one remains fixed, the limiting problem retains a directional coupling and, as a consequence, cannot in general be interpreted simply as either two isolated habitats or complete free diffusion across the whole domain.

The study of asymmetric interface problems, the core of this article, is therefore expected to be biologically richer and mathematically more involved than the symmetric case, since not only the solutions themselves, but also their gradients, may be discontinuous across the barrier. From an analytical point of view, the principal advantage of considering a symmetric permeability is that the problem becomes self-adjoint. In contrast, when the permeability is non-symmetric, some results no longer hold. However, one of the main results of this paper, Lemma \ref{lemma:variational}, is a variational characterization of the principal eigenvalue in a Hilbert space with a weight that depends on the relation $\gamma_1/\gamma_2$. Thanks to this transformation, we obtain a self-adjoint operator that allows us to characterize the eigenvalues variationally.

The variational characterization obtained in Lemma~\ref{lemma:variational} is then used to study the two singular limits of the permeability parameter. First, in Theorem~\ref{thm:gamma1.to.0} we consider the regime in which the permeability from $\Omega_1$ to $\Omega_2$ tends to zero, while the permeability in the opposite direction remains fixed. In this limit, the spectral problem separates into two different effective problems: a Neumann problem in $\Omega_1$ and a Robin-type problem in $\Omega_2$. The limiting principal eigenvalue is determined by the more favorable of these two configurations. The behavior of the corresponding eigenfunctions, however, reflects the asymmetry of the interface. If the limiting behavior is selected by the problem in $\Omega_2$, the component in $\Omega_1$ vanishes. If the problem in $\Omega_1$ is selected instead, the component in $\Omega_2$ generally remains non-trivial, because the transmission from $\Omega_2$ to $\Omega_1$ is still active. Thus, vanishing permeability in only one direction does not necessarily lead to a complete decoupling of the two regions.

The opposite regime is considered in Theorem~\ref{thm:gamma1.to.infty}, where the permeability from $\Omega_1$ to $\Omega_2$ becomes arbitrarily large while the reverse permeability remains fixed. In this case, the limiting spectral problem is again described by two different effective configurations: a Dirichlet problem in $\Omega_1$ and a Neumann problem in $\Omega_2$. Which one determines the limiting principal eigenvalue depends on their eigenvalues. The corresponding eigenfunctions also exhibit two distinct behaviors. In one case, the mass concentrates in $\Omega_1$, where a Dirichlet condition emerges at the interface. In the other, the component in $\Omega_2$ remains the dominant one, while the component in $\Omega_1$ is determined through the continuity imposed at the interface in the limit. This shows that increasing permeability in only one direction does not simply remove the membrane.

Therefore, the results stated in theorems~\ref{thm:gamma1.to.0} and~\ref{thm:gamma1.to.infty} contrast sharply with the symmetric case, in which both permeability coefficients vary simultaneously. When the common permeability tends to zero, the two subdomains become completely uncoupled, and the limiting behavior is determined by the more favorable of the two Neumann problems. On the other hand, when the common permeability tends to infinity, the jump across the interface vanishes, and the membrane disappears in the limit, yielding the standard eigenvalue problem in the whole domain with a piecewise-defined potential. This contrast between symmetric and directional permeability is the main qualitative phenomenon highlighted by our analysis.

\section{Preliminaries results}
\label{sect:preliminaries}
In this section, we collect some results related to the properties of the eigenvalues of an asymmetric interface system such as~\eqref{eq:main.system}--\eqref{eq:main.bc}. To this end, first, we fix some notations that will be used throughout the paper.

For convenience, we write $\mathbf{\Phi}=\left(\varphi_1, \varphi_2\right)$ with $\varphi_i$ defined in $\Omega_i$ and similarly $\mathbf{c}=\left(c_1, c_2\right)$, so that, if no confusion arises, we will write~\eqref{eq:main.system}--\eqref{eq:main.bc} as:
\begin{equation}
    \label{eq:main.short}
    \left\{\begin{array}{ll}
    -\Delta \mathbf{\Phi}+\mathbf{c}(x) \mathbf{\Phi}=\Lambda\mathbf{\Phi}(x) \quad &\text { in } \Omega,\\
     \mathcal{I}(\mathbf{\Phi})=0 \quad &\text { on } \Sigma \cup \Gamma.\end{array}\right.
\end{equation}

Finally, we will consider the following Banach spaces: denote as $\mathcal{L}^2$ the space $L^2(\Omega_1)\times L^2(\Omega_2)$; analogously, $\mathcal{H}^1$ will stand for $H^1(\Omega_1)\times H^1(\Omega_2)$. Other Lebesgue and Sobolev spaces will be defined by analogy.

As a general rule, the norm of a function $\mathbf{\Phi}$ is defined as the sum of the norms of $\varphi_i$ in the respective spaces.

\subsection{Scalar eigenvalue problems}

 Let $U\subset\mathbb{R}^N$ be a regular domain and
$$
\partial U=\Gamma_1 \cup \Gamma_2,
$$
with
$\Gamma_1 \cap \Gamma_2=\emptyset$ (some $\Gamma_i$ could be the empty set) and ${\bf n}$ is the outward unit normal vector field to $\partial U$.
Having in mind~\eqref{eq:main.short}, consider the uncoupled Robin problem
\begin{equation}
\label{eq:robin}
\begin{cases}-\Delta \varphi+c(x) \varphi=\sigma \varphi & \text { in } U, \\ \partial_{\bf n} \varphi+\gamma \varphi=0 & \text { on } \Gamma_1, \\ \partial_{\bf n} \varphi=0 & \text { on } \Gamma_2,\end{cases}
\end{equation}
with $c \in L^{\infty}(U)$ and $\gamma\in\mathbb{R}$, $\gamma\geq 0$. For that problem we denote
$$
\sigma_\gamma:=\sigma_1^U[-\Delta+c ; \mathcal{N}+\gamma, \mathcal{N}], \qquad\text{or}\qquad \sigma_\gamma:=\sigma_1^U[-\Delta+c ; \mathcal{N}+\gamma] \ \text{ if\ \  }\Gamma_2=\emptyset
$$
as the principal eigenvalue with  $\varphi_{\gamma}$ denoting the associated principal eigenfunction. Observe that the so-called homogeneous uncoupled interface problems
$$
\begin{cases}-\Delta \varphi_1+c_1(x) \varphi_1=\sigma \varphi_1 & \text { in } \Omega_1, \\ \partial_{\nu_1} \varphi_1+\gamma_1 \varphi=0 & \text { on } \Sigma,\end{cases}
\qquad\text{and}\qquad
\begin{cases}-\Delta \varphi_2+c_2(x) \varphi_2=\sigma \varphi_2 & \text { in } \Omega_2, \\ \partial_{\nu_2} \varphi_2+\gamma_2\varphi_2=0 & \text { on } \Sigma, \\ \partial_{\nu_2} \varphi=0 & \text { on } \Gamma,\end{cases}
$$
are special cases of~\eqref{eq:robin} defined in the respective domains $\Omega_i$. To simplify notation, in the sequel we denote their eigenvalues just as:
\begin{equation}
    \label{eq:uncoupled.eigenvalues}
    \sigma_1^{\Omega_1}[-\Delta+c_1 ; \mathcal{N}+\gamma_1],\qquad\text{and}\qquad
\sigma_1^{\Omega_2}[-\Delta+c_2; \mathcal{N}+\gamma_2].
\end{equation}

When $\Gamma_2=\emptyset$ let us also consider the eigenvalue problems of Dirichlet and Neumann type associated with $(-\Delta+c)$. That is
\begin{equation}
\label{eq:dirichlet.neumann}
\begin{cases}-\Delta \varphi+c(x) \varphi=\sigma \varphi & \text { in } U, \\
\varphi=0 & \text { on } \Gamma_1,
\end{cases}\quad\text{and}\quad
\begin{cases}-\Delta \varphi+c(x) \varphi=\sigma \varphi & \text { in } U, \\
\partial_{{\bf n}}\varphi=0 & \text { on } \Gamma_1,
\end{cases}
\end{equation}
with principal eigenvalues  denoted, respectively, by
$$
\sigma_{\mathcal{D}}:=\sigma_1^U[-\Delta+c ; \mathcal{D}] \quad \text{and}\quad \sigma_{\mathcal{N}}:=\sigma_1^U[-\Delta+c ;  \mathcal{N}]
$$
and principal eigenfunctions
$\varphi_{\mathcal{D}}$ and $\varphi_{\mathcal{N}}$.

We first recall a well-known property of the principal eigenvalue associated with Robin boundary conditions; see, e.g., \cite{Santi} for a general treatment of principal eigenvalues and their dependence on the boundary conditions. In the context of interface value problems, we refer also to~\cite{Suarezetal} for other properties of problem~\eqref{eq:robin},

\begin{lemma}
    \label{lemma:behaviour.scalar}
       Consider problem \eqref{eq:robin} with $\Gamma_2=\emptyset$. Then
    $$
    \sigma_\infty:=\lim_{\gamma \to \infty} \sigma_\gamma =\sigma_{\mathcal{D}},\quad\text{and}\quad  \sigma_0:=\lim_{\gamma \to 0^+}\sigma_\gamma=\sigma_{\mathcal{N}}.
   $$
    Moreover, if the associated positive eigenfunctions $\varphi_\gamma, \varphi_{\mathcal{D}}, \varphi_{\mathcal{N}}$ are normalized by
$\|\varphi\|_{L^2(U)}=1$, then
     $$
    \lim_{\gamma\to \infty} \varphi_\g=\varphi_{\mathcal{D}}
        ,\quad\text{and}\quad   \lim_{\gamma \to 0^+} \varphi_\g=\varphi_{\mathcal{N}}.
    $$
weakly in $H^1(U)$ and strongly in $L^2(U)$.
\end{lemma}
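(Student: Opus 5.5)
The plan is to work entirely with the Rayleigh quotient characterization
\[
\sigma_\gamma=\min_{\varphi\in H^1(U)\setminus\{0\}}\frac{\int_U |\nabla\varphi|^2+c\varphi^2+\gamma\int_{\Gamma_1}\varphi^2}{\int_U\varphi^2},
\]
and the analogous ones for $\sigma_{\mathcal N}$ (minimize over $H^1(U)$ without the boundary term) and $\sigma_{\mathcal D}$ (minimize over $H^1_0(U)$). Two facts follow immediately. First, $\gamma\mapsto\sigma_\gamma$ is nondecreasing. Second, testing with $\varphi_{\mathcal D}$, whose trace vanishes, and comparing with the Neumann quotient gives
\[
\sigma_{\mathcal N}\le\sigma_\gamma\le\sigma_{\mathcal D}.
\]
Hence both limits exist and satisfy $\sigma_0\ge\sigma_{\mathcal N}$ and $\sigma_\infty\le\sigma_{\mathcal D}$. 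For the other inequality as $\gamma\to0^+$, test the $\gamma$-quotient with $\varphi_{\mathcal N}$: this gives $\sigma_\gamma\le\sigma_{\mathcal N}+\gamma\int_{\Gamma_1}\varphi_{\mathcal N}^2$, so $\sigma_0=\sigma_{\mathcal N}$.

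The main step is the lower bound $\sigma_\infty\ge\sigma_{\mathcal D}$. Normalize $\|\varphi_\gamma\|_{L^2}=1$. From the bound $\sigma_\gamma\le\sigma_{\mathcal D}$ and $c\in L^\infty$ we obtain
\[
\int_U|\nabla\varphi_\gamma|^2+\gamma\int_{\Gamma_1}\varphi_\gamma^2\le\sigma_{\mathcal D}+\|c\|_\infty .
\]
So $\{\varphi_\gamma\}$ is bounded in $H^1(U)$, and moreover $\|\varphi_\gamma\|_{L^2(\Gamma_1)}^2=O(1/\gamma)$. Along a sequence $\gamma_n\to\infty$ we extract $\varphi_{\gamma_n}\rightharpoonup\varphi$ weakly in $H^1$. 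By Rellich, the convergence is strong in $L^2(U)$, so $\|\varphi\|_{L^2}=1$ and $\varphi\ge0$. Compactness of the trace $H^1(U)\to L^2(\partial U)$ gives trace convergence, so the trace of $\varphi$ vanishes and $\varphi\in H^1_0(U)$. We then drop the nonnegative boundary term and use weak lower semicontinuity of the Dirichlet integral together with strong $L^2$ convergence in $\int c\varphi^2$. This yields
\[
\sigma_{\mathcal D}\le\int_U|\nabla\varphi|^2+c\varphi^2\le\liminf\sigma_{\gamma_n}=\sigma_\infty .
\]

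This inequality chain forces equality throughout, so $\varphi$ is a nonnegative, normalized minimizer of the Dirichlet quotient. By simplicity of the principal eigenvalue, $\varphi=\varphi_{\mathcal D}$. The limit does not depend on the subsequence, so the whole family converges weakly in $H^1$ and strongly in $L^2$.

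The $\gamma\to0^+$ case for the eigenfunctions is identical. The $H^1$ bound now comes from $\sigma_\gamma\le\sigma_{\mathcal D}$ as before (the boundary term only helps). The weak limit $\varphi$ satisfies
\[
\int|\nabla\varphi|^2+c\varphi^2\le\liminf\sigma_\gamma=\sigma_{\mathcal N},
\]
which makes it a nonnegative normalized Neumann minimizer, so $\varphi=\varphi_{\mathcal N}$.

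The only delicate points are the identification of the limit, which relies on simplicity of principal eigenvalues and positivity of the principal eigenfunctions (standard, e.g.~\cite{Santi}), and the compactness of the trace operator, which requires $U$ regular enough, say Lipschitz.
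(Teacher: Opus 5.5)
Your proof is correct, but it identifies the limits by a different mechanism than the paper. The paper derives the uniform bound $\int_U|\nabla\varphi_n|^2+\gamma_n\int_{\partial U}\varphi_n^2\le K$ and then passes to the limit in the weak Euler--Lagrange equation \eqref{eq:weak_eig}. It tests with all $\psi\in H^1(U)$ when $\gamma_n\to0^+$, and with $\psi\in H^1_0(U)$ when $\gamma_n\to\infty$ (after showing the trace of the limit vanishes). The limit is then a nonnegative, nontrivial Neumann (resp.\ Dirichlet) eigenfunction with eigenvalue $\sigma_0$ (resp.\ $\sigma_\infty$). The paper concludes from the fact that only the principal eigenfunction has a sign.

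You work purely at the level of the energy. For $\gamma\to0^+$ you get $\sigma_0=\sigma_{\mathcal N}$ with no compactness at all, by testing the Robin quotient with $\varphi_{\mathcal N}$. For $\gamma\to\infty$ you use weak lower semicontinuity and $\varphi\in H^1_0(U)$ to get $\sigma_{\mathcal D}\le\sigma_\infty$. The limit is therefore a minimizer, and simplicity alone identifies it.

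Your route is more elementary and gives slightly more. Since your inequality chain collapses to equalities, $\int_U|\nabla\varphi_{\gamma_n}|^2\to\int_U|\nabla\varphi|^2$. So the convergence is in fact strong in $H^1(U)$, and $\gamma_n\int_{\partial U}\varphi_{\gamma_n}^2\to0$. The paper's route uses only the equation, not the minimizing property of $\varphi_\gamma$. That is the template it reuses for the coupled problem in Section 3.

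One minor point: continuity of the trace operator already suffices to pass the vanishing traces to the weak limit, so compactness of the trace is not needed there.
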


\begin{proof}
For $\gamma\geq 0$, the principal eigenvalue of~\eqref{eq:robin}, $\sigma_\gamma$, admits the variational characterization
$$
\sigma_\gamma
=
\inf_{\varphi\in H^1(U)\setminus\{0\}}
\frac{
\displaystyle
\int_U |\nabla \varphi|^2
+\int_U c(x)\varphi^2
+\gamma\int_{\partial U}\varphi^2
}{
\displaystyle
\int_U \varphi^2}.
$$
Equivalently,
$$
\sigma_\gamma
=
\inf_{\|\varphi\|_{L^2(U)}=1}
\left\{
\int_U |\nabla \varphi|^2
+\int_U c(x)\varphi^2
+\gamma\int_{\partial U}\varphi^2
\right\}.
$$
It follows immediately that $\sigma_\gamma$ is increasing and continuous with respect to $\gamma$. Moreover, by the monotonicity properties of principal eigenvalues (see, for example,~\cite{Santi}),
\begin{equation}
\label{eq:order.eigenvalues.scalar}
\sigma_{\mathcal N}\leq \sigma_\gamma\leq\sigma_{\mathcal D}.
\end{equation}
Hence the limits
$$
\sigma_0:=\lim_{\gamma\to0^+}\sigma_\gamma,
\qquad
\sigma_\infty:=\lim_{\gamma\to\infty}\sigma_\gamma
$$
exist and satisfy
$$
\sigma_{\mathcal N}\leq\sigma_0,\sigma_\infty
\leq\sigma_{\mathcal D}.
$$
Let ${\gamma_n}$ be a sequence converging either to $0^+$ or to $+\infty$, and let
$\varphi_n:=\varphi_{\gamma_n}$ be the positive principal
eigenfunction associated with  $\sigma_{n}$ and normalized by $\|\varphi_n\|_{L^2(U)}=1$. Then, the weak formulation of~\eqref{eq:robin} reads
\begin{equation}
\label{eq:weak_eig}
\int_U\nabla\varphi_n\cdot \nabla \psi
+\int_Uc(x)\varphi_n \psi
+\gamma_n\int_{\partial U}\varphi_n \psi
=
\sigma_{n}\int_U\varphi_n \psi,
\qquad \psi\in H^1(U).
\end{equation}
Taking $\psi=\varphi_n$,
from this weak formulation we conclude that
\begin{equation}
    \label{eq:proof.robin.bound}
\int_U|\nabla\varphi_n|^2
+\gamma_n\int_{\partial U}\varphi_n^2
\leq K,
\end{equation}
where $K$ is a positive constant independent of $n$. Hence
the family $\{\varphi_n\}$ is bounded in $H^1(U)$. Thus, up to a
subsequence,
$$
\varphi_n\rightharpoonup\varphi
\quad\text{weakly in }H^1(U),
\qquad
\varphi_n\rightarrow\varphi
\quad\text{strongly in }L^2(U).
$$
In particular,
$$
\|\varphi\|_{L^2(U)}=1.
$$

Suppose first that $\gamma_n\to0^+$.
 Since the trace operator
$H^1(U)\to L^2(\partial U)$ is continuous, the sequence of traces
${\varphi_n|_{\partial U}}$ is bounded in $L^2(\partial U)$. Hence, for
every $\psi\in H^1(U)$,
$$
\gamma_n\int_{\partial U}\varphi_n\psi\rightarrow0.
$$
Then, passing to the limit in~\eqref{eq:weak_eig}, we obtain
$$
\int_U\nabla\varphi\cdot \nabla\psi
+\int_Uc(x)\varphi\psi
=
\sigma_0\int_U\varphi\psi,
\qquad \psi\in H^1(U).
$$
Thus the limit function $\varphi$ is a normalized eigenfunction of the Neumann problem
associated with the eigenvalue $\sigma_0$ and
$\varphi\not\equiv0$. Therefore $\varphi$ is the principal Neumann
eigenfunction and
$$
\sigma_0=\sigma_{\mathcal N},
\qquad
\varphi=\varphi_{\mathcal N}.
$$
Since the principal eigenvalue is also simple in terms of its multiplicity, the limit is unique, and
hence the whole family converges.

Suppose now that $\gamma_n\to+\infty$. From~\eqref{eq:proof.robin.bound},
$$
\gamma_n\int_{\partial U}\varphi_n^2
\leq
K
$$
and consequently
$$
\int_{\partial U}\varphi_n^2
\leq
\frac{K}{\gamma_n}
\rightarrow0.
$$
On the other hand, the weak convergence in $H^1(U)$ implies weak
convergence of the traces. Therefore the trace of $\varphi$ vanishes on
$\partial U$, and hence $\varphi\in H_0^1(U)$.
Taking now $\psi\in H_0^1(U)$ in~\eqref{eq:weak_eig}, the boundary term
vanishes and
$$
\int_U\nabla\varphi_n\cdot \nabla\psi
+\int_Uc(x)\varphi_n\psi
=
\sigma_{n}\int_U\varphi_n\psi.
$$
Passing to the limit gives
$$
\int_U\nabla\varphi\cdot\nabla\psi
+\int_Uc(x)\varphi\psi
=
\sigma_\infty\int_U\varphi\psi,
\qquad \psi\in H_0^1(U).
$$
Since $\varphi\in H_0^1(U)$, $\|\varphi\|_{L^2(U)}=1$, and
$\varphi\geq0$, it follows that $\varphi$ is the normalized principal
Dirichlet eigenfunction. Consequently,
$$
\sigma_\infty=\sigma_{\mathcal D},
\qquad
\varphi=\varphi_{\mathcal D}.
$$
Again, by uniqueness of the normalized positive principal eigenfunction,
the whole family converges.
\end{proof}

\subsection{Interface eigenvalues}
Now, concerning the interface eigenvalue problem~\eqref{eq:main.short} we shall denote as $\Lambda$ and $\boldsymbol{\Phi}=(\varphi_1,\varphi_2)$, an eigenvalue and its corresponding associated eigenfunction of such a problem~\eqref{eq:main.short}. If $\Lambda \in \mathbb{R}$ and $\boldsymbol{\Phi} \in \mathcal{H}^1$, then \eqref{eq:main.short} holds in a weak sense; that is:
\begin{equation}
 \label{eq:weak.formulation}
\begin{aligned}
    \int_{\Omega_1} \nabla \varphi_1 \cdot\nabla \psi_1 &+ \int_{\Omega_2} \nabla \varphi_2\cdot \nabla \psi_2 +
    \int_{\Omega_1} c_1(x) \varphi_1 \psi_1+
    \int_{\Omega_2} c_2(x) \varphi_2 \psi_2 \\& +\int_{\Sigma} (\varphi_2-\varphi_1)(\gamma_2\psi_2-\gamma_1\psi_1)
    =\Lambda \left(\int_{\Omega_1}\varphi_1 \psi_1 +\int_{\Omega_2} \varphi_2 \psi_2\right)
\end{aligned}
\end{equation}
for all $\Psi=(\psi_1,\psi_2)\in\mathcal{H}^1$. Finally, $\Lambda$ is called a principal eigenvalue if $\boldsymbol{\Phi}>0$ in ${\Omega}$, with all the possible implications of having the eigenfunction $\boldsymbol{\Phi}$ denoted by the pair $(\varphi_1,\varphi_2)$. We will write that principal eigenvalue as
$$\Lambda_1(\gamma_1,\gamma_2):=\Lambda_1(-\Delta+c_1,-\Delta+c_2).$$

\begin{lemma}
\label{lemma:variational} Let $\Lambda_1(\gamma_1,\gamma_2)$ be the first eigenvalue of~\eqref{eq:main.short}. Then,
\begin{equation}
    \label{eq:Lambda.variational}
    \Lambda_1(\gamma_1,\gamma_2)=\inf_{{\bf \Phi}\neq 0}\frac{A({\bf \Phi},{\bf \Phi})}{b({\bf \Phi},{\bf \Phi})}
\end{equation}
for ${\bf \Phi}\in\mathcal{H}^1$, where
$$
A({\bf \Phi},{\bf \Phi})= \int_{\Omega_1} |\nabla \varphi_1|^2 +
    \int_{\Omega_1} c_1(x) \varphi_1^2 +\frac{\gamma_1}{\gamma_2}\left(\int_{\Omega_2} |\nabla \varphi_2|^2 +
  \int_{\Omega_2} c_2(x) \varphi_2^2 \right)
+\gamma_1\int_{\Sigma}(\varphi_2-\varphi_1)^2
$$
and
$$
b({\bf \Phi}, {\bf \Phi})=\int_{\Omega_1}\varphi_1^2  +\frac{\gamma_1}{\gamma_2}\int_{\Omega_2} \varphi_2^2.
$$
\end{lemma}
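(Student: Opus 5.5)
The plan is to symmetrize the problem by a weighted inner product, identify the Rayleigh quotient of the symmetrized form, and then invoke the standard minimization argument.

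\textbf{Step 1: symmetrization.} Set $\kappa:=\gamma_1/\gamma_2>0$ and take test functions of the special form $\Psi=(\psi_1,\kappa\psi_2)$ in the weak formulation~\eqref{eq:weak.formulation}. Then the interface term becomes
$$
\int_\Sigma(\varphi_2-\varphi_1)(\gamma_2\kappa\psi_2-\gamma_1\psi_1)=\gamma_1\int_\Sigma(\varphi_2-\varphi_1)(\psi_2-\psi_1).
$$
Since $\Psi\mapsto(\psi_1,\kappa\psi_2)$ is a bijection of $\mathcal H^1$, the formulation~\eqref{eq:weak.formulation} is equivalent to
$$
A(\boldsymbol\Phi,\Psi)=\Lambda\, b(\boldsymbol\Phi,\Psi)\qquad\text{for all }\Psi\in\mathcal H^1,
$$
where $A(\cdot,\cdot)$ and $b(\cdot,\cdot)$ are the symmetric bilinear forms whose diagonals are the expressions in the statement. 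Thus the eigenvalue problem is self-adjoint in the weighted space $\mathcal L^2_\kappa$, i.e. $\mathcal L^2$ endowed with the inner product $b$, which is equivalent to the usual one.

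\textbf{Step 2: the infimum is attained.} The form $A$ is bounded on $\mathcal H^1$ by the trace theorem. It is also coercive up to a shift: $A(\boldsymbol\Phi,\boldsymbol\Phi)+M\,b(\boldsymbol\Phi,\boldsymbol\Phi)\ge \min(1,\kappa)\|\boldsymbol\Phi\|^2_{\mathcal H^1}$ for $M$ large, using $c_i\in L^\infty$ and the fact that the interface term is nonnegative. Hence the infimum $\mu$ in~\eqref{eq:Lambda.variational} is finite. Take a minimizing sequence normalized by $b=1$. It is bounded in $\mathcal H^1$, so by Rellich compactness it converges weakly in $\mathcal H^1$ and strongly in $\mathcal L^2$. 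The traces converge strongly in $L^2(\Sigma)$ by compactness of the trace embedding, so the interface term passes to the limit, and weak lower semicontinuity of the gradient terms yields a minimizer $\boldsymbol\Phi^*$. The Euler--Lagrange equation of the quotient is exactly $A(\boldsymbol\Phi^*,\Psi)=\mu\, b(\boldsymbol\Phi^*,\Psi)$, so $\mu$ is an eigenvalue of~\eqref{eq:main.short}.

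\textbf{Step 3: positivity of the minimizer.} Replacing $\boldsymbol\Phi^*$ by $|\boldsymbol\Phi^*|$ does not increase $A$, because $(|\varphi_2|-|\varphi_1|)^2\le(\varphi_2-\varphi_1)^2$, while $b$ is unchanged. So we may take $\boldsymbol\Phi^*\ge0$, and it is again a minimizer, hence an eigenfunction. By elliptic regularity and the strong maximum principle in each $\Omega_i$, each component is either positive or identically zero. We then use the coupling to exclude a vanishing component. If $\varphi_1\equiv0$, the boundary condition gives $\partial_\nu\varphi_1=\gamma_1\varphi_2=0$ on $\Sigma$, so $\varphi_2$ vanishes on $\Sigma$. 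Since also $\partial_\nu\varphi_2=\gamma_2\varphi_2=0$ there, the Hopf lemma forces $\varphi_2\equiv0$, a contradiction. The case $\varphi_2\equiv0$ is symmetric. Therefore $\boldsymbol\Phi^*>0$ and $\mu$ is a principal eigenvalue.

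\textbf{Step 4: identification with $\Lambda_1$ (main obstacle).} It remains to show $\mu=\Lambda_1(\gamma_1,\gamma_2)$, the eigenvalue with positive eigenfunction. Let $\boldsymbol\Phi>0$ be the eigenfunction for $\Lambda_1$. Testing its equation, in the symmetric form of Step 1, with $\boldsymbol\Phi^*$, and the equation for $\boldsymbol\Phi^*$ with $\boldsymbol\Phi$, and using the symmetry of $A$ and $b$, gives
$$
(\Lambda_1-\mu)\,b(\boldsymbol\Phi,\boldsymbol\Phi^*)=0.
$$
Since both functions are positive, $b(\boldsymbol\Phi,\boldsymbol\Phi^*)>0$, hence $\Lambda_1=\mu$. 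The delicate points are precisely the ones underlying this argument: the Hopf/maximum-principle argument of Step 3 across the interface, which needs enough regularity of $\Sigma$, and the uniqueness of the principal eigenvalue, which the orthogonality argument settles.
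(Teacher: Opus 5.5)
Your proposal is correct and follows the paper's route. The paper's proof consists only of the symmetrization obtained by rescaling the $\Omega_2$-component by $\gamma_1/\gamma_2$, followed by an appeal to ``standard arguments''. You correctly apply the rescaling to the test function, which is what turns the interface term into $\gamma_1\int_\Sigma(\varphi_2-\varphi_1)(\psi_2-\psi_1)$.

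Your Steps 2--4 are exactly those standard arguments written out in full: the direct method, passing to $|{\bf \Phi}^*|$ together with the maximum principle and the Hopf lemma to get positivity, and the identity $(\Lambda_1-\mu)\,b({\bf \Phi},{\bf \Phi}^*)=0$ to identify the principal eigenvalue.
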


\begin{proof}
Let $(\varphi_1,\varphi_2)$ a principal eigenfunction of~\eqref{eq:weak.formulation} and $\Lambda_1(\gamma_1)$ the corresponding principal eigenvalue. Then  $(\varphi_1,\frac{\gamma_1}{\gamma_2}\varphi_2)$ verifies
$$
\begin{aligned}
    \int_{\Omega_1} \nabla \varphi_1\cdot \nabla \psi_1 &+
    \int_{\Omega_1} c_1(x) \varphi_1 \psi_1+\frac{\gamma_1}{\gamma_2}\left(\int_{\Omega_2} \nabla \varphi_2 \cdot \nabla \psi_2 +
  \int_{\Omega_2} c_2(x) \varphi_2 \psi_2 \right)\\& +\gamma_1\int_{\Sigma}(\varphi_2-\varphi_1)(\psi_2-\psi_1)
    =\Lambda(\gamma_1,\gamma_2) \left(\int_{\Omega_1}\varphi_1 \psi_1 +\frac{\gamma_1}{\gamma_2}\int_{\Omega_2} \varphi_2 \psi_2\right).
\end{aligned}
$$
for all $\Psi\in\mathcal{H}^1$.
Thus, this transformation has given rise to the weak formulation of a self-adjoint eigenvalue problem, being $\Lambda_1(\gamma_1, \gamma_2)$ the principal eigenvalue, and by standard arguments we can therefore conclude~\eqref{eq:Lambda.variational}.\end{proof}

\begin{remark}
    Observe that in the case of symmetric permeability $\gamma:=\gamma_1=\gamma_2$, the variational characterization~\eqref{eq:Lambda.variational} becomes just
\begin{equation}
    \label{eq:eigenvalue.gamma}
\Lambda_1(\gamma)
=
\inf_{{\bf \Phi}\neq 0}
\frac{
\displaystyle
\int_{\Omega_1} |\nabla \varphi_1|^2
+\int_{\Omega_1} c_1(x)\varphi_1^2+\int_{\Omega_2} |\nabla \varphi_2|^2
+\int_{\Omega_2} c_2(x)\varphi_2^2
+\gamma\int_{\Sigma}|\varphi_2-\varphi_1|^2
}{
\displaystyle
\int_{\Omega_1} \varphi_1^2+\int_{\Omega_2} \varphi_2^2
},
\end{equation}
 see~\cite{WangSu}.
\end{remark}

In the following lemmas, we will state some properties of the principal eigenvalue in terms of the permeability parameters that will be used to prove the main results of this paper. Without loss of generality, we state the results in terms of $\gamma_1$.

\begin{lemma}
\label{lemma:cont.monotone}
    The map $\gamma_1\mapsto \Lambda_1(\gamma_1,\gamma_2)$ is a continuous map.
    If additionally $\gamma_1=\gamma_2$ then the map $\gamma\mapsto \Lambda_1(\gamma)$ is non-decreasing.
\end{lemma}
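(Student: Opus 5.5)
The plan is to work throughout with the variational characterization of Lemma~\ref{lemma:variational}. Fix $\gamma_2>0$ and write $t:=\gamma_1/\gamma_2$, so that
$$
\Lambda_1(\gamma_1,\gamma_2)=\inf_{{\bf \Phi}\neq 0}\frac{E_1(\varphi_1)+t\,E_2(\varphi_2)+\gamma_1\int_\Sigma(\varphi_2-\varphi_1)^2}{\int_{\Omega_1}\varphi_1^2+t\int_{\Omega_2}\varphi_2^2},
$$
where $E_i(\varphi_i)=\int_{\Omega_i}|\nabla\varphi_i|^2+\int_{\Omega_i}c_i\varphi_i^2$.

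\emph{Monotonicity in the symmetric case.} When $\gamma_1=\gamma_2=\gamma$ this reduces to \eqref{eq:eigenvalue.gamma}. There the denominator does not depend on $\gamma$, and the numerator is non-decreasing in $\gamma$ for each fixed ${\bf \Phi}$. Taking the infimum of a pointwise non-decreasing family gives that $\gamma\mapsto\Lambda_1(\gamma)$ is non-decreasing. The same argument shows that $\Lambda_1(\gamma)$ is an infimum of affine functions of $\gamma$, hence concave and in particular continuous on $(0,\infty)$. That settles the symmetric claim.

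\emph{Continuity in $\gamma_1$ with $\gamma_2$ fixed.} Fix $\gamma_1^0>0$ and let $\gamma_1\to\gamma_1^0$, staying inside a compact interval $[a,b]\subset(0,\infty)$. I will prove upper and lower semicontinuity separately.

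For the upper bound (limsup), I take the fixed principal eigenfunction ${\bf \Phi}^0$ at $\gamma_1^0$ as a test function in the quotient at $\gamma_1$. Numerator and denominator are then affine in $\gamma_1$, and the denominator stays bounded away from zero. So the quotient tends to $\Lambda_1(\gamma_1^0,\gamma_2)$, which gives $\limsup\Lambda_1(\gamma_1,\gamma_2)\le\Lambda_1(\gamma_1^0,\gamma_2)$.

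For the lower bound (liminf), I take a sequence $\gamma_1^n\to\gamma_1^0$ and principal eigenfunctions ${\bf \Phi}^n$ normalized by $b_n({\bf \Phi}^n,{\bf \Phi}^n)=1$. The first step is a uniform lower bound on the eigenvalues: using $c_i\ge-\|c_i\|_\infty$ and $t\ge a/\gamma_2$, one gets $\Lambda_1\ge-\max\|c_i\|_\infty$. Combined with the upper bound, the $\Lambda_1(\gamma_1^n,\gamma_2)$ are bounded. The normalized quotient then gives a uniform $\mathcal{H}^1$ bound on ${\bf \Phi}^n$, since $t_n$ is bounded above and below. By Rellich compactness and compactness of the trace $H^1(\Omega_i)\to L^2(\Sigma)$, a subsequence satisfies ${\bf \Phi}^n\rightharpoonup{\bf \Phi}$ in $\mathcal{H}^1$, strongly in $\mathcal{L}^2$, with traces converging in $L^2(\Sigma)$. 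The limit is admissible because $b_0({\bf \Phi},{\bf \Phi})=1$. Finally, weak lower semicontinuity of the gradient terms, together with strong convergence of the remaining terms and $t_n\to t_0$, gives
$$
\Lambda_1(\gamma_1^0,\gamma_2)\le A_0({\bf \Phi},{\bf \Phi})\le\liminf A_n({\bf \Phi}^n,{\bf \Phi}^n)=\liminf\Lambda_1(\gamma_1^n,\gamma_2).
$$
Since the sequence was arbitrary, continuity follows.

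The main obstacle I expect is this liminf step: I must confirm that the weighted norm $b$ controls the $\mathcal{L}^2$ norm uniformly, which holds because $t_n$ is bounded away from $0$ and $\infty$ near $\gamma_1^0>0$, and that the coupling term passes to the limit. The coupling term needs the compact trace embedding; alternatively one can simply use lower semicontinuity, since $\gamma_1^n\int_\Sigma(\varphi_2^n-\varphi_1^n)^2$ is a convex continuous functional in ${\bf \Phi}$ multiplied by a convergent coefficient.
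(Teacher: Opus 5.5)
Your proof is correct. For the symmetric monotonicity it is the same as the paper's: the numerator of \eqref{eq:eigenvalue.gamma} is non-decreasing in $\gamma$ and the denominator does not depend on it.

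For continuity in $\gamma_1$ you take a different and more complete route. The paper only writes the quotient for fixed ${\bf \Phi}$ as $R(\gamma_1)=\frac{a_1+(a_2+s)\gamma_1}{b_1+b_2\gamma_1}$ and asserts that continuity of the infimum follows. An infimum of continuous functions is in general only upper semicontinuous, which is exactly your $\limsup$ step. Your $\liminf$ step supplies what the paper leaves implicit:
\begin{itemize}
\item uniform bounds on $\Lambda_1$;
\item a uniform $\mathcal{H}^1$ bound, valid because $t_n$ stays in a compact subset of $(0,\infty)$;
\item Rellich compactness and compact traces;
\item weak lower semicontinuity;
\item $b_0({\bf \Phi},{\bf \Phi})=1$, so the limit is admissible.
\end{itemize}
Formally, you should first pass to a subsequence realizing the $\liminf$ before extracting the convergent one.

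There is an elementary alternative closer to the paper's idea, avoiding compactness and eigenfunctions altogether. Replace $c_i$ by $c_i+M$ with $M\ge\max_i\|c_i\|_\infty$. This turns $A$ into $A+Mb$, so it shifts $\Lambda_1$ by exactly $M$, and it makes $a_1,a_2\ge 0$. Then for $0<\gamma_1\le\gamma_1'$ every quotient satisfies $\frac{\gamma_1}{\gamma_1'}R(\gamma_1)\le R(\gamma_1')\le\frac{\gamma_1'}{\gamma_1}R(\gamma_1)$, uniformly in ${\bf \Phi}$. Taking infima gives the same two-sided bound for the shifted eigenvalue, hence local Lipschitz continuity. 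That is the estimate that would make the paper's one-line argument rigorous.

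Your compactness argument is the more robust of the two. It does not depend on the specific rational dependence on $\gamma_1$, and it is of the same type as the limiting arguments used later in Section 3.
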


\begin{proof}
Observe that for ${\bf \Phi}=(\varphi_1,\varphi_2)$ and $\gamma_2$ fixed, the Rayleigh quotient can be written as
    \begin{equation}
    \label{eq:Ray_gamma1}
    R(\gamma_1)=\frac{a_1+(a_2+s)\gamma_1}{b_1+b_2\gamma_1},
    \end{equation}
    where
$$
a_1=\int_{\Omega_1}|\nabla\varphi_1|^2+c_1\varphi_1^2,\quad a_2=\frac1{\gamma_2}\left(\int_{\Omega_2}|\nabla\varphi_2|^2+c_2\varphi_2^2\right),\quad s=\int_\Sigma(\varphi_2-\varphi_1)^2,
$$
and
$$
b_1=\int_{\Omega_1}\varphi_1^2,\qquad b_2=\frac1{\gamma_2}\int_{\Omega_2}\varphi_2^2.
$$
Consequently, from the definition of the function \eqref{eq:Ray_gamma1} and such that $\Lambda_1(\gamma_1,\gamma_2)$ is given by~\eqref{eq:Lambda.variational} we can conclude the continuity of the map $\gamma_1\mapsto \Lambda_1(\gamma_1,\gamma_2)$.

The monotonicity when $\gamma_1=\gamma_2$  is a direct consequence of the variational definition of the first eigenvalue~\eqref{eq:eigenvalue.gamma}.
\end{proof}

\begin{remark}
\label{rem:rayleigh}
    When $\gamma_1\neq \gamma_2$, then the monotonicity is, in general, not true. Indeed, using \eqref{eq:Ray_gamma1} it follows that
$$
\frac{d}{d\gamma_1}R(\gamma_1)=\frac{(a_2+s)b_1-a_1b_2}{(b_1+\gamma_1b_2)^2},
$$
which can be positive or negative.
\end{remark}

Finally, we provide a bound for the first eigenvalue \eqref{eq:Lambda.variational} in terms of the eigenvalues of different uncoupled problems.
 \begin{lemma}[\cite{Suarezetal}]
\label{autovalores_acotados}
Let $\Lambda_1(\gamma_1,\gamma_2)$ be the first eigenvalue of~\eqref{eq:main.short}. Then
$$
\Lambda_1(\gamma_1,\gamma_2)<\min \left\{\sigma_1^{\Omega_1}\left[-\Delta+c_1 ; \mathcal{N}+\gamma_1\right], \sigma_1^{\Omega_2}\left[-\Delta+c_2 ; \mathcal{N}+\gamma_2\right]\right\},
$$ where $\sigma_1^{\Omega_1}\left[-\Delta+c_1 ; \mathcal{N}+\gamma_1\right]$ and $\sigma_1^{\Omega_2}\left[-\Delta+c_2 ; \mathcal{N}+\gamma_2\right]$ are the principal eigenvalues of the uncoupled problems given in~\eqref{eq:uncoupled.eigenvalues}.
\end{lemma}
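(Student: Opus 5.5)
We argue by comparison, using the principal eigenfunctions of the uncoupled problems \eqref{eq:uncoupled.eigenvalues} as supersolutions and a strong maximum principle / Hopf argument to obtain strictness. The most direct route is to test the weak formulation \eqref{eq:weak.formulation} of the coupled problem against suitable functions and exploit positivity of the principal eigenfunction $\boldsymbol{\Phi}=(\varphi_1,\varphi_2)>0$.

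For the bound involving $\Omega_1$, let $\phi>0$ be the principal eigenfunction of $\sigma:=\sigma_1^{\Omega_1}[-\Delta+c_1;\mathcal N+\gamma_1]$. Take the test function $\Psi=(\phi,0)$ in \eqref{eq:weak.formulation}. This gives
$$
\int_{\Omega_1}\nabla\varphi_1\cdot\nabla\phi+c_1\varphi_1\phi-\gamma_1\int_\Sigma(\varphi_2-\varphi_1)\phi=\Lambda_1\int_{\Omega_1}\varphi_1\phi .
$$
Using $\varphi_1$ as test function in the weak form of the Robin problem for $\phi$, the left side equals
$$
\sigma\int_{\Omega_1}\varphi_1\phi-\gamma_1\int_\Sigma\varphi_2\phi .
$$
Hence
$$
(\sigma-\Lambda_1)\int_{\Omega_1}\varphi_1\phi=\gamma_1\int_\Sigma\varphi_2\phi .
$$
Since $\varphi_2>0$ on $\Sigma$ (by Hopf's lemma and regularity: $\varphi_2$ is positive up to the boundary, otherwise a boundary zero would force $\partial_\nu\varphi_2<0$, contradicting the transmission condition), and $\phi>0$ on $\overline\Omega_1$ (Robin principal eigenfunction), the right-hand side is strictly positive. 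Therefore $\Lambda_1<\sigma$.

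The bound involving $\Omega_2$ is symmetric. Take $\Psi=(0,\phi_2)$, with $\phi_2$ the Robin principal eigenfunction in $\Omega_2$ carrying the Neumann condition on $\Gamma$. The boundary term is $\gamma_2\int_\Sigma(\varphi_2-\varphi_1)\phi_2$, and the same computation yields
$$
(\sigma_2-\Lambda_1)\int_{\Omega_2}\varphi_2\phi_2=\gamma_2\int_\Sigma\varphi_1\phi_2>0 .
$$

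The main obstacle is justifying strict positivity of the traces $\varphi_i|_\Sigma$ and $\phi|_\Sigma$, which requires that the principal eigenfunctions be positive on the closure. I would first establish regularity ($C^{1,\alpha}$ up to $\Sigma$ via the Hölder coefficients and smooth interface) and then apply the Hopf lemma together with the Robin or Kedem–Katchalsky conditions, which make a boundary zero incompatible with a negative outward derivative. Alternatively, it suffices that the traces be nonnegative and not identically zero on $\Sigma$: if $\varphi_2\equiv0$ on $\Sigma$, the transmission condition would give $\partial_\nu\varphi_2=-\gamma_2\varphi_1\le0$, contradicting Hopf in $\Omega_2$.
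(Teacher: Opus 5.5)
Your proof is correct, and it takes a different route from the paper.

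\textbf{The paper's route.} The paper observes that $\varphi_1$ satisfies $(-\Delta+c_1-\Lambda_1)\varphi_1=0$ in $\Omega_1$ with $\partial_\nu\varphi_1+\gamma_1\varphi_1=\gamma_1\varphi_2>0$ on $\Sigma$. It treats $\varphi_1$ as a positive strict supersolution of the homogeneous Robin problem. It then invokes the maximum-principle characterization from \cite{Suarezetal}: a positive strict supersolution forces $\sigma_1^{\Omega_1}[-\Delta+c_1-\Lambda_1;\mathcal N+\gamma_1]>0$.

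\textbf{Your route.} You test the coupled weak formulation with $(\phi,0)$, where $\phi$ is the Robin principal eigenfunction. Using the symmetry of the scalar Robin form, you get the explicit identity $(\sigma-\Lambda_1)\int_{\Omega_1}\varphi_1\phi=\gamma_1\int_\Sigma\varphi_2\phi$, and similarly in $\Omega_2$. I checked the signs against the paper's weak formulation and they are right. This computation is essentially the standard proof of the supersolution criterion the paper cites, unpacked in this specific setting.

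\textbf{What each buys.} Your version is self-contained and gives a quantitative expression for the gap $\sigma-\Lambda_1$. It also shows clearly that strictness comes entirely from the coupling term, so $\gamma_1>0$ is what matters. It further makes explicit why the trace $\varphi_2|_\Sigma$ is positive: at a would-be boundary zero, Hopf gives $\partial_{\nu_2}\varphi_2<0$, while the transmission condition gives $\partial_{\nu_2}\varphi_2=\gamma_2\varphi_1\ge 0$. The paper asserts $\gamma_1\varphi_2>0$ on $\Sigma$ without comment. The paper's route is shorter given the black box and does not use self-adjointness of the scalar problem. It would therefore carry over unchanged to non-symmetric operators in $\Omega_i$, where you would have to replace $\phi$ by the principal eigenfunction of the adjoint Robin problem.
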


\begin{proof} For the sake of completeness, we include the proof here, which can also be read in~\cite{Suarezetal}.
Consider ${\bf \Phi}=\left(\varphi_1, \varphi_2\right)$ a positive eigenfunction of~\eqref{eq:main.short} associated with  $\Lambda_1(\gamma_1,\gamma_2)$. Then,
$$
-\Delta \varphi_1+\left(c_1-\Lambda_1(\gamma_1, \gamma_2)\right) \varphi_1=0 \quad \text { in } \Omega_1, \text{\qquad and\qquad} \partial_\nu \varphi_1+\gamma_1 \varphi_1=\gamma_1 \varphi_2>0 \quad \text { on } \Sigma.
$$
Hence, $\varphi_1$ is a positive (strict) supersolution of
$$
\left\{
\begin{array}
    {ll}
    -\Delta \varphi_1+\left(c_1-\Lambda_1(\gamma_1, \gamma_2)\right) \varphi_1=0,\qquad& \text { in } \Omega_1,\\
    \varphi_1+\gamma_1 \varphi_1=0,& \text { on } \Sigma.
\end{array}\right.
$$
As a consequence of \cite[Proposition 3.1 3]{Suarezetal} we obtain that
$$\sigma_1^{\Omega_1}\left[-\Delta+c_1-\Lambda_1(\gamma_1, \gamma_2) ; \mathcal{N}+\gamma_1\right]>0,
$$
or equivalently, $\sigma_1^{\Omega_1}\left[-\Delta+c_1 ; \mathcal{N}+\gamma_1\right]>\Lambda_1(\gamma_1, \gamma_2)$.

A similar argument shows that $
\Lambda_1(\gamma_1,\gamma_2 )< \sigma_1^{\Omega_2}\left[-\Delta+c_2 ; \mathcal{N}+\gamma_2\right]
$.
\end{proof}

\begin{corollary}
\label{corol:Lambda.less.D}
Let $\Lambda_1(\gamma_1,\gamma_2)$ be the first eigenvalue of~\eqref{eq:main.short}. Then
 \begin{equation}
    \label{eq:bound.eigenvalue}
    \Lambda(\gamma_1, \gamma_2)< \min\left\{
    \sigma_1^{\Omega_1}[-\Delta+c_1;\mathcal{D}]; \sigma_1^{\Omega_2}[-\Delta+c_2;\mathcal{D}]
    \right\}
\end{equation}
\end{corollary}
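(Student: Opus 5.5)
The plan is to combine Lemma~\ref{autovalores_acotados} with the ordering \eqref{eq:order.eigenvalues.scalar} of scalar principal eigenvalues obtained in the proof of Lemma~\ref{lemma:behaviour.scalar}. Lemma~\ref{autovalores_acotados} already gives the strict inequality
$$
\Lambda_1(\gamma_1,\gamma_2)<\min \left\{\sigma_1^{\Omega_1}\left[-\Delta+c_1 ; \mathcal{N}+\gamma_1\right], \sigma_1^{\Omega_2}\left[-\Delta+c_2 ; \mathcal{N}+\gamma_2\right]\right\}.
$$
So it remains to show that each Robin eigenvalue is bounded by the corresponding Dirichlet one, i.e.
$$
\sigma_1^{\Omega_i}[-\Delta+c_i;\mathcal{N}+\gamma_i]\le \sigma_1^{\Omega_i}[-\Delta+c_i;\mathcal{D}],\qquad i=1,2.
$$

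For $\Omega_1$, whose boundary is $\Sigma$, this is exactly the right-hand inequality in \eqref{eq:order.eigenvalues.scalar} with $U=\Omega_1$ and $\Gamma_1=\Sigma$. It can also be seen directly from the variational characterization. Restricting the infimum defining $\sigma_{\gamma_1}$ to $H_0^1(\Omega_1)\subset H^1(\Omega_1)$ kills the boundary term $\gamma_1\int_\Sigma\varphi^2$. What is left is precisely the infimum defining $\sigma_{\mathcal D}$, and an infimum over a smaller set can only be larger.

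For $\Omega_2$, the boundary splits into $\Sigma$ (Robin) and $\Gamma$ (Neumann). Here I would interpret $\sigma_1^{\Omega_2}[-\Delta+c_2;\mathcal{D}]$ as the Dirichlet eigenvalue on the whole of $\partial\Omega_2$; if instead it means Dirichlet only on $\Sigma$ and Neumann on $\Gamma$, the argument is identical. The Robin--Neumann eigenvalue is
$$
\inf_{H^1(\Omega_2)}\frac{\int_{\Omega_2}|\nabla\varphi|^2+c_2\varphi^2+\gamma_2\int_\Sigma\varphi^2}{\int_{\Omega_2}\varphi^2}.
$$
Testing it with the Dirichlet principal eigenfunction, which lies in $H_0^1(\Omega_2)$ (or vanishes on $\Sigma$ in the mixed case), again removes the boundary term and gives the inequality.

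Chaining the strict inequality from Lemma~\ref{autovalores_acotados} with these non-strict ones yields \eqref{eq:bound.eigenvalue}; strictness is inherited from the first step. There is no real obstacle. The only delicate point is clarifying which boundary the Dirichlet condition refers to in $\Omega_2$, and the Rayleigh quotient argument covers both readings.
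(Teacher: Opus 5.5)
Your proposal is correct and follows the paper's route. The paper also obtains the corollary by chaining the strict bound of Lemma~\ref{autovalores_acotados} with the Robin--Dirichlet ordering \eqref{eq:order.eigenvalues.scalar}. Your direct Rayleigh-quotient check on $\Omega_2$ is a useful detail the paper leaves implicit, since $\partial\Omega_2$ contains the Neumann part $\Gamma$ and \eqref{eq:order.eigenvalues.scalar} is only stated there for $\Gamma_2=\emptyset$.
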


\begin{proof}
    This is a direct consequence of the monotonicity properties of the eigenvalues with respect to the domain, see~\eqref{eq:order.eigenvalues.scalar} and~\cite{Santi}.
\end{proof}

The following lemma, where we give a sharper bound for the principal eigenvalue, will be very important in obtaining the main results of the paper later on.
\begin{lemma}
Let $\Lambda_1(\gamma_1,\gamma_2)$ be the first eigenvalue of~\eqref{eq:main.short}. Then
\begin{equation}
    \label{eq:cota.Lambda.autovalor}
    \Lambda_1(\gamma_1, \gamma_2)\leq\frac{a_1+\frac{\gamma_1}{\gamma_2}\sigma_1^{\Omega_2}[-\Delta +c_2,\mathcal{N}]}{b_1+\frac{\gamma_1}{\gamma_2}}
\end{equation}
{where
$$
a_1=\int_{\Omega_1}|\nabla\varphi_1|^2+c_1(x)\varphi_1^2,
\quad
b_1=\int_{\Omega_1}\varphi_1^2
$$
being $\varphi_1$ a regular function such that $\varphi_1= \varphi_2$ on $\Sigma$, with
$\varphi_2$ is the positive eigenfunction associated with the first eigenvalue $\s_1^{\Omega_2}[-\Delta +c_2,\mathcal{N}]$ of the Neumann problem \eqref{eq:dirichlet.neumann}
  with $\|\varphi_2\|_{L^2(\Omega_2)}=1$.}
 \end{lemma}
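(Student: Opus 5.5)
The plan is to plug a well-chosen test function into the variational characterization of Lemma~\ref{lemma:variational}. I would take ${\bf \Phi}=(\varphi_1,\varphi_2)$, where $\varphi_2$ is the positive principal Neumann eigenfunction in $\Omega_2$ with eigenvalue $\sigma_1^{\Omega_2}[-\Delta+c_2,\mathcal{N}]$, normalized so that $\|\varphi_2\|_{L^2(\Omega_2)}=1$. For $\varphi_1$ I would take any regular extension into $\Omega_1$ of the trace of $\varphi_2$ on $\Sigma$, so that $\varphi_1=\varphi_2$ on $\Sigma$. Such an extension exists in $H^1(\Omega_1)$, and can even be taken smooth, because $\varphi_2$ is regular up to $\Sigma$ by elliptic regularity; for instance, one can solve a Dirichlet problem in $\Omega_1$ with boundary datum $\varphi_2|_\Sigma$. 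Then ${\bf \Phi}\in\mathcal{H}^1\setminus\{0\}$ is admissible in \eqref{eq:Lambda.variational}.

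Next I would evaluate the quotient term by term. The interface term $\gamma_1\int_\Sigma(\varphi_2-\varphi_1)^2$ vanishes by construction. For the $\Omega_2$ part, I would test the weak Neumann formulation with $\varphi_2$ itself, which gives
$$
\int_{\Omega_2}|\nabla\varphi_2|^2+c_2\varphi_2^2=\sigma_1^{\Omega_2}[-\Delta+c_2,\mathcal{N}]\int_{\Omega_2}\varphi_2^2=\sigma_1^{\Omega_2}[-\Delta+c_2,\mathcal{N}].
$$
By the normalization, $b({\bf \Phi},{\bf \Phi})=b_1+\frac{\gamma_1}{\gamma_2}$. The $\Omega_1$ contributions are exactly $a_1$ and $b_1$.

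Combining these, $A({\bf \Phi},{\bf \Phi})=a_1+\frac{\gamma_1}{\gamma_2}\sigma_1^{\Omega_2}[-\Delta+c_2,\mathcal{N}]$, and taking the infimum in \eqref{eq:Lambda.variational} gives \eqref{eq:cota.Lambda.autovalor}. The only real point of care is the admissibility of the test function: its trace must match on $\Sigma$ and $\varphi_1$ must have finite energy. This is handled by the $H^1$ extension described above, since $\varphi_2|_\Sigma\in H^{1/2}(\Sigma)$ and in fact it is smooth. I do not expect any other obstacle. It is worth noting that $a_1$ and $b_1$ do not depend on $\gamma_1$, which is presumably what makes the bound useful in the later limits.
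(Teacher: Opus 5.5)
Your proposal is correct and is essentially the paper's proof: insert ${\bf \Phi}=(\varphi_1,\varphi_2)$ into the weighted Rayleigh quotient of Lemma~\ref{lemma:variational}, observe that the interface term vanishes, and use the normalized Neumann eigenfunction identity to evaluate the $\Omega_2$ contribution. Your explicit construction of the extension $\varphi_1$ of $\varphi_2|_\Sigma$ into $\Omega_1$ (e.g.\ via a Dirichlet problem, using $\partial\Omega_1=\Sigma$) is a detail the paper leaves implicit.
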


\begin{proof}
  {
Taking $\Phi=(\varphi_1,\varphi_2)$ in Lemma \ref{lemma:variational} and by definition \eqref{eq:Lambda.variational}, we get  that
    $$
    \Lambda_1(\gamma_1, \gamma_2)\leq \frac{a_1+\dfrac{\gamma_1}{\gamma_2}\left(\displaystyle\int_{\Omega_2}|\nabla\varphi_2|^2+c_2(x)\varphi_2^2\right)+\gamma_1\int_\Sigma(\varphi_2-\varphi_1)^2}{b_1+\dfrac{\gamma_1}{\gamma_2}\displaystyle\int_{\Omega_2}\varphi_2^2}.
$$
Now, observe that
$$
\int_\Sigma(\varphi_2-\varphi_1)^2=0
$$
and
$$
\int_{\Omega_2}|\nabla\varphi_2|^2+c_2(x)\varphi_2^2= \s_1^{\Omega_2}[-\Delta +c_2,\mathcal{N}],
$$
from which the result follows.
}
\end{proof}

\section{Limit behavior}
In this section we study the effect of the permeability on the behavior of the species.
In the sequel, we will write
$\Lambda(\gamma_1)$ and ${\bf \Phi}_{\gamma_1}$ to denote the first eigenvalue of the asymmetric problem and its corresponding eigenfunction, for a fixed value of the parameter $\gamma_2$, and $\Lambda(\gamma)$ and ${\bf \Phi}_{\gamma}$ for the symmetric problem, when $\gamma=\gamma_1=\gamma_2$.

 First, let us consider for the rest of this section a sequence $\gamma_{1,n}$ converging to $0$ or $\infty$ (depending on the behavior of $\gamma_1$) as $n\to\infty$, and let $\Lambda_n:=\Lambda(\gamma_{1,n})$ and ${\bf \Phi}_n=(\varphi_{1,n},\varphi_{2,n})\in\mathcal{H}^1$ be the principal eigenvalue and the eigenfunction of~\eqref{eq:main.short}, with ${\bf \Phi}_n$
normalized so that $\|{\bf \Phi}_n\|_{\mathcal{H}^1}=1$, that is
\begin{equation}
    \label{eq:normalization}
\|\varphi_{1,n}\|_{H^1(\Omega_1)}+\|\varphi_{2,n}\|_{H^1(\Omega_2)}=1.
\end{equation}

\begin{lemma}
\label{lemma:Lambda.convergence}
Let $\{\gamma_{1,n}\}$ be a sequence such that
$\gamma_{1,n}\to 0$ (respectively $\gamma_{1,n}\to+\infty$), and let $\Lambda_n$ denote the corresponding
principal eigenvalue of \eqref{eq:main.short}.  Then, up
to a subsequence, there exists $\Lambda_0\in\mathbb{R}$ (respectively $\Lambda_\infty\in\mathbb{R}$) such that
$$
\Lambda_n\rightarrow\Lambda_0, \quad \text{(respectively $\Lambda_n\rightarrow\Lambda_\infty$)}
\qquad\text{as }n\to\infty.
$$
\end{lemma}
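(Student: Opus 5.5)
The plan is to show that $\{\Lambda_n\}$ is bounded in $\mathbb{R}$ and then apply Bolzano--Weierstrass to extract a convergent subsequence. Both bounds should be uniform in $n$ and valid whether $\gamma_{1,n}\to0$ or $\gamma_{1,n}\to+\infty$.

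\emph{Upper bound.} Corollary~\ref{corol:Lambda.less.D} already gives
$$
\Lambda_n<\min\left\{\sigma_1^{\Omega_1}[-\Delta+c_1;\mathcal{D}],\ \sigma_1^{\Omega_2}[-\Delta+c_2;\mathcal{D}]\right\}
$$
for every $n$. The right-hand side does not depend on $\gamma_1$ or $\gamma_2$, so it is a uniform upper bound. As an alternative one could use Lemma~\ref{autovalores_acotados} together with $\sigma_1^{\Omega_2}[-\Delta+c_2;\mathcal{N}+\gamma_2]$, which is fixed because $\gamma_2$ is fixed.

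\emph{Lower bound.} I would use the variational characterization of Lemma~\ref{lemma:variational}. Take $\mathbf{\Phi}=(\varphi_1,\varphi_2)\neq0$ and set $t=\gamma_{1,n}/\gamma_2>0$. Discarding the nonnegative gradient terms and the nonnegative interface term $\gamma_1\int_\Sigma(\varphi_2-\varphi_1)^2$ gives
$$
A(\mathbf{\Phi},\mathbf{\Phi})\geq \int_{\Omega_1}c_1\varphi_1^2+t\int_{\Omega_2}c_2\varphi_2^2\geq -M\Big(\int_{\Omega_1}\varphi_1^2+t\int_{\Omega_2}\varphi_2^2\Big)=-M\,b(\mathbf{\Phi},\mathbf{\Phi}),
$$
where $M:=\max\{\|c_1\|_{L^\infty(\Omega_1)},\|c_2\|_{L^\infty(\Omega_2)}\}$. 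This is finite because $c_i\in C^{0,\alpha}(\overline\Omega_i)$. Taking the infimum yields $\Lambda_n\geq -M$ for all $n$.

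The main point to check is that this lower bound is uniform in $t\in(0,\infty)$. It is, because the weight $t$ multiplies the same subdomain in both the numerator and the denominator. So the degeneration of the weight $\gamma_{1,n}/\gamma_2$, toward $0$ or toward $\infty$, has no effect on the bound. With $-M\le\Lambda_n<\min\{\sigma_1^{\Omega_1}[\cdot;\mathcal{D}],\sigma_1^{\Omega_2}[\cdot;\mathcal{D}]\}$ established, Bolzano--Weierstrass gives a subsequence converging to some $\Lambda_0$ (respectively $\Lambda_\infty$) in $\mathbb{R}$.
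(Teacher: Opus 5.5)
Your proof is correct and follows the paper's route: show that $\{\Lambda_n\}$ is bounded and apply Bolzano--Weierstrass, with the upper bound coming from Lemma~\ref{autovalores_acotados} or Corollary~\ref{corol:Lambda.less.D}. You are in fact more complete than the paper, since Lemma~\ref{autovalores_acotados} only bounds $\Lambda_n$ from above, and your variational estimate $A(\mathbf{\Phi},\mathbf{\Phi})\geq -M\,b(\mathbf{\Phi},\mathbf{\Phi})$, uniform in the weight $\gamma_{1,n}/\gamma_2$, supplies the lower bound that the paper leaves implicit.
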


\begin{proof}
The proof is straightforward just noting that  $\{\Lambda_n\}$ is bounded, see Lemma~\ref{autovalores_acotados}.
\end{proof}

\subsection{An impermeable interface: the limit behavior as $\gamma_1\to 0$}
Let us start analyzing the case when the permeability effect, in terms of the parameter $\gamma_1$, is canceled out, once $\gamma_2$ is fixed. Such an effect shows a very interesting and new result while assuming the asymmetric case. In particular, when one of the permeability parameters, in this case $\gamma_1$, goes to zero, the eigenfunction for that side of the domain, i.e., $\varphi_1$, vanishes as $\gamma_1 \to 0$, with the eigenfunctions concentrating on the other part of the domain $\Omega$.

\begin{theorem}
\label{thm:gamma1.to.0}
Let  $\Lambda_1(\gamma_1)$ be the first eigenvalue of \eqref{eq:main.short} and ${\bf \Phi}_{\gamma_1}$ the associated eigenfunction, with $\gamma_2$ fixed. Then
 \begin{equation}
     \label{eq:gamma0min}
 \Lambda_0:=\lim_{\gamma_1\to 0} \Lambda_1(\gamma_1)=\min\{
    \sigma_1^{\Omega_1}[-\Delta+c_1;\mathcal{N}]; \sigma_1^{\Omega_2}[-\Delta+c_2;\mathcal{N}+\gamma_2]
    \}.
     \end{equation}
Furthermore, if ${\bf \Phi}_{\gamma_1}$ is such that $\|{\bf \Phi}_{\gamma_1}\|_{\mathcal{H}^1}=1$, then
$$
\lim_{\gamma_1\to 0}{\bf \Phi}_{\gamma_1}={\bf \Phi}_0\quad\text{in $\mathcal{H}^1$,}
$$
where
    $$
        {\bf \Phi}_0=(\varphi_{1,0},\varphi_{2,0})=\left\{\begin{array}{ll}
            (v_{1,0},v_{2,0})& \text{if } \Lambda_0=\sigma_1^{\Omega_1}[-\Delta+c_1;\mathcal{N}]<\sigma_1^{\Omega_2}[-\Delta+c_2;\mathcal{N}+\gamma_2],\\[6pt]
            (0,w_{2,0})& \text{if } \Lambda_0=\sigma_1^{\Omega_2}[-\Delta+c_2;\mathcal{N}+\gamma_2]\leq \sigma_1^{\Omega_1}[-\Delta+c_1;\mathcal{N}]
        \end{array}\right.
    $$
    with $v_{1,0}$ and $w_{2,0}$ being the positive eigenfunctions associated with $\sigma_1^{\Omega_1}[-\Delta+c_1;\mathcal{N}]$ and $\sigma_1^{\Omega_2}[-\Delta+c_2;\mathcal{N}+\gamma_2]$, {with $\|v_{1,0}\|_{H^1(\Omega_1)}=1$ and $\|w_{2,0}\|_{H^1(\Omega_2)}=1$}, respectively, and $v_{2,0}$ the unique positive solution of
\begin{equation}
    \label{ecu02}
\left\{\begin{array}{ll}
(-\Delta+c_2(x)) \varphi_{2} =\Lambda_0 \varphi_2,\quad   &\hbox{in } \Omega_2,\\
\partial_{\nu_2} \varphi_2+\gamma_2 \varphi_2 =\gamma_2\varphi_{1,0},&\hbox{on } \Sigma,\\
\partial_{\nu_2} \varphi_2=0,&\hbox{on } \Gamma.
 \end{array}\right.
\end{equation}
\end{theorem}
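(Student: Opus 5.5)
Write $\sigma_N:=\sigma_1^{\Omega_1}[-\Delta+c_1;\mathcal{N}]$ and $\sigma_R:=\sigma_1^{\Omega_2}[-\Delta+c_2;\mathcal{N}+\gamma_2]$. The plan is to work directly with the weak formulation~\eqref{eq:weak.formulation} along a sequence $\gamma_{1,n}\to0$, with $\mathbf{\Phi}_n$ normalized as in~\eqref{eq:normalization}. By Lemma~\ref{lemma:Lambda.convergence}, after passing to a subsequence we have $\Lambda_n\to\Lambda_0$. We also have $\mathbf{\Phi}_n\rightharpoonup\mathbf{\Phi}_0=(\varphi_{1,0},\varphi_{2,0})$ weakly in $\mathcal{H}^1$, strongly in $\mathcal{L}^2$, and with traces converging strongly in $L^2(\Sigma)$ by compactness of the trace map. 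Both limit components satisfy $\varphi_{i,0}\ge0$.

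\textbf{Limit problems.} First take test functions $(\psi_1,0)$. The interface term is $-\gamma_{1,n}\int_\Sigma(\varphi_{2,n}-\varphi_{1,n})\psi_1\to0$, so in the limit $\varphi_{1,0}$ is a weak solution of the Neumann problem in $\Omega_1$ with eigenvalue $\Lambda_0$. Next take test functions $(0,\psi_2)$, with $\gamma_2$ fixed. In the limit $\varphi_{2,0}$ solves exactly~\eqref{ecu02} with datum $\varphi_{1,0}$.

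\textbf{Nontriviality.} To show $\mathbf{\Phi}_0\neq0$ and obtain strong $\mathcal{H}^1$ convergence, I will test the equations with $\mathbf{\Phi}_n$ itself, component by component. On $\Omega_2$ this gives
$$\int_{\Omega_2}|\nabla\varphi_{2,n}|^2=\Lambda_n\int\varphi_{2,n}^2-\int c_2\varphi_{2,n}^2-\gamma_2\int_\Sigma(\varphi_{2,n}-\varphi_{1,n})\varphi_{2,n},$$
and the right-hand side converges to the corresponding expression for $\varphi_{2,0}$, which equals $\int|\nabla\varphi_{2,0}|^2$. Hence the norms converge, and the same argument works on $\Omega_1$. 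So the convergence is strong in $\mathcal{H}^1$, and $\|\mathbf{\Phi}_0\|_{\mathcal{H}^1}=1$.

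\textbf{Identifying $\Lambda_0$.} By Lemma~\ref{autovalores_acotados} and the continuity of Robin eigenvalues in $\gamma_1$ from Lemma~\ref{lemma:behaviour.scalar}, $\Lambda_0\le\min\{\sigma_N,\sigma_R\}$. I then split into two cases.
\begin{itemize}
\item If $\varphi_{1,0}\not\equiv0$, it is a nonnegative Neumann eigenfunction, so $\Lambda_0=\sigma_N$. Since $\varphi_{2,0}\ge0$ and $\varphi_{2,0}\not\equiv0$ (as $\varphi_{1,0}>0$ on $\Sigma$), $\varphi_{2,0}$ is a positive strict supersolution of the Robin problem, by the argument of Lemma~\ref{autovalores_acotados}. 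This forces $\sigma_N<\sigma_R$.
\item If $\varphi_{1,0}\equiv0$, then $\varphi_{2,0}\neq0$ is a nonnegative Robin eigenfunction, so $\Lambda_0=\sigma_R\le\sigma_N$.
\end{itemize}
In either case $\Lambda_0=\min\{\sigma_N,\sigma_R\}$. This identifies the limit independently of the subsequence, and it also identifies the case that applies: when $\sigma_R\le\sigma_N$, the first case is impossible, since it would require $\sigma_N<\sigma_R$.

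\textbf{Uniqueness and conclusion.} In the case $\sigma_N<\sigma_R$, the problem~\eqref{ecu02} is uniquely solvable because $\Lambda_0<\sigma_R$, so $\varphi_{2,0}$ is determined linearly by $\varphi_{1,0}=t\,v_{1,0}$. The constant $t$ is then fixed by the total normalization. This pair is what the statement denotes $(v_{1,0},v_{2,0})$, up to the interpretation of its normalization convention. In the other case, simplicity of the principal eigenvalue gives $\varphi_{2,0}=w_{2,0}$. Since the limit is unique, the whole family converges.

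The main obstacle I expect is ruling out $\varphi_{1,0}\neq0$ when $\sigma_R\le\sigma_N$, including the borderline equality case. This needs the strict inequality from the supersolution argument, and that argument requires $\varphi_{2,0}\ge0$ and $\varphi_{2,0}\not\equiv0$ with a nonzero boundary datum.
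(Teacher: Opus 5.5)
Your proof is correct. It differs from the paper's in two places.

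\textbf{Compactness.} The paper invokes a uniform $W^{2,p}$ bound, independent of $\gamma_1$, and the compact embedding into $C^1$. You instead upgrade weak to strong $\mathcal{H}^1$ convergence through energy identities. You test each component equation with $\varphi_{i,n}$ and compare with the limit equation tested with $\varphi_{i,0}$; the term $\gamma_{1,n}\int_\Sigma(\varphi_{2,n}-\varphi_{1,n})\varphi_{1,n}$ vanishes because $\gamma_{1,n}\to0$ and the traces are bounded. Your route needs only the compactness of the trace map, and it yields $\|{\bf\Phi}_0\|_{\mathcal H^1}=1$ directly. The paper's route gives the stronger $C^1$ convergence, but it rests on a uniform regularity estimate that is asserted rather than justified.

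\textbf{Case analysis.} The paper splits according to the order of $\sigma_N$ and $\sigma_R$ into three cases. It uses the upper bounds on $\Lambda_0$ in the strict cases. For $\sigma_N=\sigma_R$ it argues separately, testing with $(0,w_{2,0})$ to reach $\gamma_2\int_\Sigma\varphi_{1,0}w_{2,0}=0$. You split instead on whether $\varphi_{1,0}$ vanishes. In the nonvanishing case you extract the strict inequality $\sigma_N<\sigma_R$ from the strict-supersolution argument of Lemma~\ref{autovalores_acotados}, which disposes of the borderline case at once and makes the identification independent of the subsequence.

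These are really the same comparison. Subtracting the Robin eigen-equation for $w_{2,0}$, tested with $\varphi_{2,0}$, from the limit equation for $\varphi_{2,0}$, tested with $w_{2,0}$, gives
\[
(\sigma_R-\Lambda_0)\int_{\Omega_2}w_{2,0}\,\varphi_{2,0}=\gamma_2\int_\Sigma\varphi_{1,0}\,w_{2,0}.
\]
You could use this identity in place of the supersolution characterization. If $\varphi_{1,0}>0$ on $\Sigma$, it shows at once that $\varphi_{2,0}\not\equiv0$ and $\sigma_R>\Lambda_0=\sigma_N$. It also avoids invoking Hopf's lemma for $\varphi_{2,0}$.

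\textbf{Normalization.} Your remark is right. Since $v_{2,0}\not\equiv0$, the statement's requirements $\|v_{1,0}\|_{H^1(\Omega_1)}=1$ and $\|{\bf\Phi}_0\|_{\mathcal H^1}=1$ cannot both hold. The paper's own proof likewise identifies $\varphi_{1,0}$ only up to a positive multiple, which is fixed by the total normalization exactly as you describe.
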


\begin{proof}
First, note that, by elliptic regularity, for any $p>1$ there exists a constant $C(p) > 0$, independent of $\gamma_1$, such that
$$
 \|{\bf \Phi}_{\gamma_1}\|_{\mathcal{W}^{2,p}}\leq C,
 $$
 Fixing $p > N$, the compact embedding $\mathcal{W}^{2,p}(\Omega) \subset \mathcal{C}^1(\bar{\Omega})$ implies that, up to a subsequence,
 $$
 {\bf \Phi}_{\gamma_1}\to {\bf \Phi}_{0}\quad\text{in $\mathcal{C}^1$,}
 $$
 for some ${\bf \Phi}_{0}\geq 0$, $\|{\bf \Phi}_{0}\|_{\mathcal{H}^1}=1$. Hence, passing to the limit in the weak formulation
\eqref{eq:weak.formulation}, first with
${\bf\Psi}=(\psi_1,0)$, we obtain
\begin{equation}
\label{eq:limit.gamma0.1}
\int_{\Omega_1}
\nabla\varphi_{1,0}\cdot\nabla\psi_1
+
\int_{\Omega_1}
c_1(x)\varphi_{1,0}\psi_1
=
\Lambda_0
\int_{\Omega_1}\varphi_{1,0}\psi_1
\end{equation}
for every $\psi_1\in H^1(\Omega_1)$.
Similarly, taking ${\bf\Psi}=(0,\psi_2)$ gives
\begin{equation}
\label{eq:limit.gamma0.2}
\int_{\Omega_2}
\nabla\varphi_{2,0}\cdot\nabla\psi_2
+
\int_{\Omega_2}
c_2(x)\varphi_{2,0}\psi_2
+
\gamma_2\int_{\Sigma}
(\varphi_{2,0}-\varphi_{1,0})\psi_2
=
\Lambda_0
\int_{\Omega_2}\varphi_{2,0}\psi_2
\end{equation}
for every $\psi_2\in H^1(\Omega_2)$.

To identify the limit function ${\bf \Phi}$ and $\Lambda_0$ we distinguish three cases according to the minimum in \eqref{eq:gamma0min}.

\noindent{\sc Case 1}:
 $ \sigma_1^{\Omega_2}[-\Delta+c_2;\mathcal{N}+\gamma_2]<\sigma_1^{\Omega_1}[-\Delta+c_1;\mathcal{N}]$.

By the monotonicity of the eigenvalues with respect to the boundary conditions, see \eqref{eq:order.eigenvalues.scalar}, and by Lemma~\ref{autovalores_acotados}, we have that
$$
\Lambda_1(\gamma_1)<\sigma_1^{\Omega_2}[-\Delta+c_2;\mathcal{N}+\gamma_2]<\sigma_1^{\Omega_1}[-\Delta+c_1;\mathcal{N}]<\sigma_1^{\Omega_1}[-\Delta+c_1;\mathcal{N}+\gamma_1].
$$

Now, assume first that $\varphi_{1,0}\not\equiv 0$. Then,  from \eqref{eq:limit.gamma0.1} we have that $\varphi_{1,0}$ is a non-negative eigenfunction
\begin{equation}
\label{eq:Neumann_prob_Lam_0}
\left\{\begin{array}{ll}
(-\Delta+c_1(x)) \varphi_{1,0} =\Lambda_0 \varphi_{1,0},\quad   &\hbox{in } \Omega_1,\\
\partial_{\nu}\varphi_{1,0} =0,&\hbox{on } \partial\Sigma.
 \end{array}\right.
 \end{equation}
 Therefore,
$$
\Lambda_0= \sigma_1^{\Omega_1}[-\Delta+c_1;\mathcal{N}],
$$
which is a contradiction. Thus, we conclude that
$$
\varphi_{1,0}=0,
$$
and therefore, since $\|{\bf \Phi}_0\|_{\mathcal{H}^1}=1,$ we have $\varphi_{2,0}\not\equiv 0$. Now, \eqref{eq:limit.gamma0.2} reduces to
$$
   \int_{\Omega_2} \nabla \varphi_2 \cdot\nabla \psi_2 +
    \int_{\Omega_2} c_2(x) \varphi_2 \psi_2  +\gamma_2\int_{\Sigma} \varphi_2\psi_2
    =\Lambda_0 \int_{\Omega_2} \varphi_2 \psi_2.
$$
Hence, ${\varphi_{2,0}}$ is an eigenfunction of the uncoupled Robin problem and, by simplicity of the eigenvalues,
$$
\Lambda_0:=\sigma_1^{\Omega_2}[-\Delta+c_2;\mathcal{N}+\gamma_2].
$$

\noindent {\sc Case 2}: $ \sigma_1^{\Omega_1}[-\Delta+c_1;\mathcal{N}]<\sigma_1^{\Omega_2}[-\Delta+c_2;\mathcal{N}+\gamma_2]$.

By continuity of the eigenvalue with respect to the permeability parameter,  see Lemma~\ref{lemma:cont.monotone}, we have that there exists $\gamma_1$ sufficiently small so that
$$
\sigma_1^{\Omega_1}[-\Delta+c_1;\mathcal{N}]<\sigma_1^{\Omega_1}[-\Delta+c_1;\mathcal{N}+\gamma_1]<\sigma_1^{\Omega_2}[-\Delta+c_2;\mathcal{N}+\gamma_2].
$$
Hence, by Lemma~\ref{autovalores_acotados} we have
$$
\Lambda_1(\gamma_1)<\sigma_1^{\Omega_1}[-\Delta+c_1;\mathcal{N}+\gamma_1]<\sigma_1^{\Omega_2}[-\Delta+c_2;\mathcal{N}+\gamma_2].
$$

We claim now that $\varphi_{1,0}\not\equiv0$. Indeed, otherwise
$\varphi_{2,0}\not\equiv0$, since
$\|{\bf\Phi}_0\|_{\mathcal H^1}=1$. Furthermore,~\eqref{eq:limit.gamma0.2} would reduce to the homogeneous Robin
eigenvalue problem in $\Omega_2$. Thus
$$
\Lambda_0=\sigma_1^{\Omega_2}[-\Delta+c_2;\mathcal{N}+\gamma_2],
$$
a contradiction.

Hence $\varphi_{1,0}\not\equiv0$, and
\eqref{eq:limit.gamma0.1}, together with
$\varphi_{1,0}\geq0$, implies that $\varphi_{1,0}$ is the principal eigenfunction, up to a multiplicative constant, associated with
$$
\Lambda_0=\sigma_1^{\Omega_1}[-\Delta+c_1;\mathcal{N}].
$$

Now to identify $\varphi_{2,0}$, we note that from  \eqref{eq:limit.gamma0.2} we have that  $\varphi_{2,0}$ solves the problem
$$
\left\{
\begin{array}{ll}
(-\Delta+c_2(x)-\Lambda_0)\varphi_{2,0}
=0,
&\text{in }\Omega_2,\\
\partial_{\nu_2}\varphi_{2,0}
+\gamma_2\varphi_{2,0}
=\gamma_2\varphi_{1,0},
&\text{on }\Sigma,\\
\partial_{\nu_2}\varphi_{2,0}=0,
&\text{on }\Gamma.
\end{array}
\right.
$$
The
above problem admits a unique positive solution $\varphi_{2,0}$, for such mixed boundary conditions; see \cite{Santi}.

\noindent{\sc Case 3}:  $ \sigma_1^{\Omega_2}[-\Delta+c_2;\mathcal{N}+\gamma_2]
=\sigma_1^{\Omega_1}[-\Delta+c_1;\mathcal{N}]$.

In this limit case, we find that the behavior is the same as in {\sc Case 1}. Thus, we find that $\varphi_{1,0}=0$, $\varphi_{2,0}>0$ and we conclude that $\Lambda_0=\sigma_1^{\Omega_2}[-\Delta+c_2;\mathcal{N}+\gamma_2]$.

To prove it we first claim that $\varphi_{1,0}=0$ in $\Omega_1$
and argue by contradiction. Indeed, suppose that
$\varphi_{1,0}\not\equiv0$. Then, thanks to
\eqref{eq:limit.gamma0.1} and
$\varphi_{1,0}\geq 0$ it follows that
$\varphi_{1,0}$ is the associated principal eigenfunction with the principal eigenvalue $\sigma_1^{\Omega_1}[-\Delta+c_1;\mathcal{N}]$ of the Neumann problem \eqref{eq:Neumann_prob_Lam_0}.
{
Taking in  \eqref{eq:weak.formulation}
${\bf\Psi}=(0,w_{2,0})$, we obtain
$$
\int_{\Omega_2}
\nabla\varphi_{2,\gamma_1}\cdot\nabla w_{2,0}
+
\int_{\Omega_2}
c_2(x)\varphi_{2,\gamma_1}w_{2,0}
+
\gamma_2\int_{\Sigma}
(\varphi_{2,\gamma_1}-\varphi_{1,\gamma_1})w_{2,0}
=
\Lambda_1(\gamma_1)
\int_{\Omega_2}\varphi_{2,\gamma_1}w_{2,0}.
$$
On the other hand, since $w_{2,0}$ is the positive eigenfunction associated with $\sigma_1^{\Omega_2}[-\Delta+c_2;\mathcal{N}+\gamma_2]$, using the weak formulation of that problem for the test function $\varphi_{2,\gamma_1}$ we have
\[
\int_{\Omega_2}
\nabla w_{2,0}\cdot\nabla\varphi_{2,\gamma_1}
+
\int_{\Omega_2}
c_2(x)w_{2,0}\varphi_{2,\gamma_1}
+
\gamma_2\int_{\Sigma}
w_{2,0}\varphi_{2,\gamma_!}
=
\Lambda_0
\int_{\Omega_2}w_{2,0}\varphi_{2,\gamma_1}.
\]
Now, subtracting both identities, and passing to the limit,  we get
\[
\gamma_2\int_{\Sigma}
\varphi_{1,0}w_{2,0}=0.
\]
However, that is not possible since $\gamma_2>0$ and both $\varphi_{1,0}$ and $w_{2,0}$ are positive on $\Sigma$.
Therefore, one of them must be zero, proving this case.
}
\end{proof}

\subsection{A completely permeable interface: $\gamma_1\to \infty$}
As we will show below, the previous case, when there is no permeability, shows a similar behavior while assuming symmetric or asymmetric permeability. However, the case we are dealing with in this section provides us again with a very interesting result, showing a new and different behavior from the symmetric case.

\begin{theorem}
\label{thm:gamma1.to.infty}Let  $\Lambda_1(\gamma_1)$ be the first eigenvalue of \eqref{eq:main.short} and ${\bf \Phi}_{\gamma_1}$ the associated eigenfunction. Then
    $$\Lambda_\infty:=\lim_{\gamma_1\to\infty} \Lambda(\gamma_1)=\min\{
    \sigma_1^{\Omega_1}[-\Delta+c_1;\mathcal{D}]; \sigma_1^{\Omega_2}[-\Delta+c_2;\mathcal{N}]
    \}.$$
Furthermore, if ${\bf \Phi}_{\gamma_1}$ is such that $\|{\bf \Phi}_{\gamma_1}\|_{\mathcal{H}^1}=1$, then
$$
\lim_{\gamma_1\to \infty}{\bf \Phi}_{\gamma_1}={\bf \Phi}_\infty\quad\text{in $\mathcal{H}^1$,}
$$
with
    $$
        {\bf \Phi}_\infty=(\varphi_{1,\infty},\varphi_{2,\infty})=\left\{\begin{array}{ll}
            (v_{1,\infty},0)& \text{if } \Lambda_\infty=\sigma_1^{\Omega_1}[-\Delta+c_1;\mathcal{D}]\leq \sigma_1^{\Omega_2}[-\Delta+c_2;\mathcal{N}]\\[6pt]
            (w_{1,\infty},w_{2,\infty})& \text{if } \Lambda_\infty=\sigma_1^{\Omega_2}[-\Delta+c_2;\mathcal{N}]<\sigma_1^{\Omega_1}[-\Delta+c_1;\mathcal{D}]
        \end{array}\right.
    $$
  with $v_{1,\infty}$ and $w_{2,\infty}$ being the positive eigenfunctions associated to $\sigma_1^{\Omega_1}[-\Delta+c_1;\mathcal{D}]$ and $\sigma_1^{\Omega_2}[-\Delta+c_2;\mathcal{N}]$ {with $\|v_{1,\infty}\|_{H^1(\Omega_1)}=1$ and $\|w_{2,\infty}\|_{H^1(\Omega_2)}=1$}, respectively, and $w_{1,\infty}$ the unique positive solution of
$$
\left\{\begin{array}{ll}
(-\Delta+c_1(x)-\Lambda_\infty) \varphi_{1} =0\quad   &\hbox{in } \Omega_1,\\
\varphi_1 =\varphi_{2,\infty},&\hbox{on } \Sigma.
 \end{array}\right.
$$
\end{theorem}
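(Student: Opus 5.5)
The plan mirrors the proof of Theorem~\ref{thm:gamma1.to.0}, with one extra ingredient: uniform control of the interface jump, to be extracted from the energy identity.

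\textbf{Step 1: compactness and the limiting equations.} Let $\gamma_{1,n}\to\infty$ and normalize as in~\eqref{eq:normalization}. By Lemma~\ref{lemma:Lambda.convergence}, $\Lambda_n\to\Lambda_\infty$ along a subsequence. Testing~\eqref{eq:weak.formulation} with $\Psi=(\varphi_{1,n},0)$ gives
$$\gamma_{1,n}\int_\Sigma(\varphi_{2,n}-\varphi_{1,n})\varphi_{1,n}=O(1).$$
Testing with $(0,\varphi_{2,n})$ gives
$$\gamma_2\int_\Sigma(\varphi_{2,n}-\varphi_{1,n})\varphi_{2,n}=O(1).$$
Combining the two, $\gamma_{1,n}\int_\Sigma\varphi_{1,n}(\varphi_{2,n}-\varphi_{1,n})$ is bounded. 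The cleaner route is to test with $(0,\psi_2)$ and $(\psi_1,0)$ separately. The second test shows $\partial_\nu\varphi_{1,n}=\gamma_{1,n}(\varphi_{2,n}-\varphi_{1,n})$ on $\Sigma$. Since $\Phi_n$ is bounded in $\mathcal H^1$, a uniform $\mathcal W^{2,p}$ bound would give a bounded normal derivative, and hence
$$\|\varphi_{2,n}-\varphi_{1,n}\|_{L^2(\Sigma)}=O(1/\gamma_{1,n}).$$
At the $H^1$ level one can instead use the weighted form of Lemma~\ref{lemma:variational}, where $A(\Phi_n,\Phi_n)=\Lambda_n b(\Phi_n,\Phi_n)$. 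I then extract a subsequence with $\Phi_n\rightharpoonup\Phi_\infty\ge0$ weakly in $\mathcal H^1$ and strongly in $\mathcal L^2$ and in $L^2(\Sigma)$.

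Passing to the limit:
\begin{itemize}
\item Testing with $(0,\psi_2)$, the term $\gamma_2\int_\Sigma(\varphi_{2,n}-\varphi_{1,n})\psi_2$ tends to $0$, so $\varphi_{2,\infty}$ is a Neumann eigenfunction-type solution in $\Omega_2$ with eigenvalue $\Lambda_\infty$.
\item Testing with $(\psi_1,0)$ with $\psi_1\in H^1_0(\Omega_1)$ shows that $\varphi_{1,\infty}$ solves $(-\Delta+c_1-\Lambda_\infty)\varphi_{1,\infty}=0$ with trace $\varphi_{1,\infty}=\varphi_{2,\infty}$ on $\Sigma$.
\end{itemize}

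\textbf{Step 2: the upper bound.} Lemma~\ref{autovalores_acotados} with the Robin eigenvalue in $\Omega_1$, together with~\eqref{eq:order.eigenvalues.scalar} and Lemma~\ref{lemma:behaviour.scalar}, gives $\Lambda_\infty\le\sigma_1^{\Omega_1}[-\Delta+c_1;\mathcal D]$. For the other branch, the bound~\eqref{eq:cota.Lambda.autovalor} tends to $\sigma_1^{\Omega_2}[-\Delta+c_2;\mathcal N]$ as $\gamma_1\to\infty$, because the weight $\gamma_1/\gamma_2$ dominates; this requires a fixed extension $\varphi_1$ of $\varphi_2|_\Sigma$. Hence $\Lambda_\infty\le\min\{\sigma_{\mathcal D}^{\Omega_1},\sigma_{\mathcal N}^{\Omega_2}\}$.

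\textbf{Step 3: case analysis.}
\begin{itemize}
\item If $\varphi_{2,\infty}\not\equiv0$, then it is a nonnegative Neumann eigenfunction, so $\Lambda_\infty=\sigma_{\mathcal N}^{\Omega_2}$ and $\varphi_{2,\infty}=w_{2,\infty}$ up to normalization. Then $\varphi_{1,\infty}$ solves the Dirichlet problem with data $w_{2,\infty}$. This problem is uniquely solvable with a positive solution when $\Lambda_\infty<\sigma_{\mathcal D}^{\Omega_1}$, by the strong maximum principle~\cite{Santi}.
\item If $\varphi_{2,\infty}\equiv0$, then $\varphi_{1,\infty}\in H^1_0(\Omega_1)$ is nonzero by the normalization, which must be handled carefully. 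It is therefore a Dirichlet principal eigenfunction, and $\Lambda_\infty=\sigma_{\mathcal D}^{\Omega_1}$.
\end{itemize}
Combined with the upper bound, this identifies the minimum in each strict case and yields uniqueness of the limit, so the whole family converges.

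For the tie $\sigma_{\mathcal D}^{\Omega_1}=\sigma_{\mathcal N}^{\Omega_2}$, I would rule out $\varphi_{2,\infty}\not\equiv0$ by a Fredholm argument. Suppose $\varphi_{1,\infty}$ solved the Dirichlet problem at the Dirichlet eigenvalue with positive boundary data $w_{2,\infty}$. Multiplying by $v_{1,\infty}$ and integrating by parts gives $\int_\Sigma w_{2,\infty}\,\partial_\nu v_{1,\infty}=0$. This contradicts Hopf's lemma, since $\partial_\nu v_{1,\infty}<0$ on $\Sigma$. This is analogous to Case 3 of Theorem~\ref{thm:gamma1.to.0}.

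\textbf{Main obstacle.} The main obstacle is upgrading weak to strong $\mathcal H^1$ convergence and preventing loss of mass under the $\mathcal H^1$ normalization. A uniform $\mathcal W^{2,p}$ bound is not available as $\gamma_1\to\infty$, because of the Robin coefficient in $\Omega_1$. I would first obtain strong convergence by showing $\|\nabla\Phi_n\|_{\mathcal L^2}\to\|\nabla\Phi_\infty\|_{\mathcal L^2}$ from the energy identities, using that the jump term $\gamma_{1,n}\int_\Sigma(\varphi_{2,n}-\varphi_{1,n})^2$ tends to zero. This follows from the weighted identity, after dividing by $\gamma_{1,n}/\gamma_2$, once $b(\Phi_n,\Phi_n)$ is controlled. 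Strong convergence then gives $\|\Phi_\infty\|_{\mathcal H^1}=1$, which rules out $\Phi_\infty\equiv0$.
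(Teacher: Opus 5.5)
Your overall architecture is the paper's: the two upper bounds (Corollary~\ref{corol:Lambda.less.D}, and \eqref{eq:cota.Lambda.autovalor} with a fixed lifting), the dichotomy on $\varphi_{2,\infty}$, and the Green/Hopf argument against $v_{1,\infty}$ in the tie case.

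\textbf{First gap: the limit has no jump.} Step~1 never proves that $\varphi_{1,\infty}=\varphi_{2,\infty}$ on $\Sigma$. Yet both of your bullets use it, and so does the case $\varphi_{2,\infty}\equiv0$, where you need it to place $\varphi_{1,\infty}$ in $H^1_0(\Omega_1)$. You yourself concede that the uniform $\mathcal W^{2,p}$ bound is unavailable. The weighted identity $A(\Phi_n,\Phi_n)=\Lambda_n b(\Phi_n,\Phi_n)$ is literally the test with $(\varphi_{1,n},0)$ plus $\gamma_{1,n}/\gamma_2$ times the test with $(0,\varphi_{2,n})$. It therefore carries no information beyond $\gamma_{1,n}\int_\Sigma(\varphi_{2,n}-\varphi_{1,n})\varphi_{1,n}=O(1)$. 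That bound is compatible with $\varphi_{1,n}|_\Sigma=O(\gamma_{1,n}^{-1})$ while $\varphi_{2,n}|_\Sigma$ stays positive, so it does not control the jump. The missing idea is Lemma~\ref{lemma:bound.gamma.L2}: test with $(e_n,0)$, where $e_n\in H^1(\Omega_1)$ has trace $\varphi_{2,n}-\varphi_{1,n}$ on $\Sigma$ and is uniformly bounded thanks to the $\mathcal H^1$ normalization. This gives $\gamma_{1,n}\int_\Sigma(\varphi_{2,n}-\varphi_{1,n})^2=\int_{\Omega_1}\nabla\varphi_{1,n}\cdot\nabla e_n+\int_{\Omega_1}(c_1-\Lambda_n)\varphi_{1,n}e_n\le C$. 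Alternatively, testing with a fixed $\psi_1\in H^1(\Omega_1)$ and using density of traces plus compactness of the trace also suffices.

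\textbf{Second gap: strong $\mathcal H^1$ convergence.} Your resolution of the ``main obstacle'' fails. Dividing the weighted identity by $\gamma_{1,n}/\gamma_2$ gives $\gamma_2\int_\Sigma(\varphi_{2,n}-\varphi_{1,n})^2=\Lambda_n\int_{\Omega_2}\varphi_{2,n}^2-\int_{\Omega_2}(|\nabla\varphi_{2,n}|^2+c_2\varphi_{2,n}^2)+O(\gamma_{1,n}^{-1})$. The factor $\gamma_{1,n}$ in front of the jump is gone, and the entire $\Omega_1$ energy is multiplied by $\gamma_2/\gamma_{1,n}\to0$. Once the jump is known to vanish, this does yield strong convergence of $\varphi_{2,n}$ in $H^1(\Omega_2)$. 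But it neither shows $\gamma_{1,n}\int_\Sigma(\varphi_{2,n}-\varphi_{1,n})^2\to0$ nor controls $\|\nabla\varphi_{1,n}\|_{L^2(\Omega_1)}$. So non-triviality of $\Phi_\infty$ when $\varphi_{2,\infty}\equiv0$ (the point you defer) and the claimed $\mathcal H^1$ convergence both remain unproved.

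A working argument goes as follows. In the identity from $(\varphi_{1,n},0)$, split $\gamma_{1,n}\int_\Sigma(\varphi_{2,n}-\varphi_{1,n})\varphi_{1,n}=\gamma_{1,n}\int_\Sigma(\varphi_{2,n}-\varphi_{1,n})\varphi_{2,n}-\gamma_{1,n}\int_\Sigma(\varphi_{2,n}-\varphi_{1,n})^2$. Let $E$ be a fixed bounded lifting of traces on $\Sigma$ into $H^1(\Omega_1)$. Testing with $(E\varphi_{2,n},0)$ turns the first term into $\int_{\Omega_1}\nabla\varphi_{1,n}\cdot\nabla(E\varphi_{2,n})+\int_{\Omega_1}(c_1-\Lambda_n)\varphi_{1,n}E\varphi_{2,n}$. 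This converges (weak times strong) to $\int_{\Omega_1}|\nabla\varphi_{1,\infty}|^2+(c_1-\Lambda_\infty)\varphi_{1,\infty}^2$, because $\varphi_{1,\infty}-E\varphi_{2,\infty}\in H^1_0(\Omega_1)$. The second term is non-positive. Hence $\|\nabla\varphi_{1,n}\|^2_{L^2(\Omega_1)}+\gamma_{1,n}\int_\Sigma(\varphi_{2,n}-\varphi_{1,n})^2\to\|\nabla\varphi_{1,\infty}\|^2_{L^2(\Omega_1)}$. Together with weak lower semicontinuity, this gives both strong convergence and the vanishing of the weighted jump.

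This route is also safer than the Cauchy estimate in the paper's Lemma~\ref{lemma.convergence.phi}. That estimate invokes $\gamma_{1,k}\|\varphi_{2,k}-\varphi_{1,k}\|_{L^2(\Sigma)}\le C$, whereas Lemma~\ref{lemma:bound.gamma.L2} only bounds $\gamma_{1,k}$ times the squared norm.
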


The proof of this theorem follows the steps of the scalar case; however, due to the lack of symmetry, it requires auxiliary results that we prove next.

{The first result provides us with a bound for the interface term. One main difficulty we face in obtaining some results of this paper is that the difference $(\varphi_{2,n}-\varphi_{1,n})$ has no defined sign on $\Sigma$. Even more, we have to control the boundary term when $\gamma_1\to\infty$. For these reasons, the next result plays a crucial role in the sequel.}
 \begin{lemma}
\label{lemma:bound.gamma.L2}
There exists a positive constant $C>0$, independent of $n$, such that
    $$\gamma_{1,n} \int_{\Sigma}|\varphi_{2,n}-\varphi_{1,n}|^2\leq C.$$
\end{lemma}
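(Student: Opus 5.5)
The plan is to test the weak formulation \eqref{eq:weak.formulation} with the eigenfunction itself, weighted as in the proof of Lemma~\ref{lemma:variational}. This isolates the interface term exactly. The one dangerous term, which carries the large factor $\gamma_{1,n}/\gamma_2$, will then be controlled using the sharper upper bound \eqref{eq:cota.Lambda.autovalor}.

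\textbf{Step 1 (the identity).} Take ${\bf \Psi}=(\varphi_{1,n},\frac{\gamma_{1,n}}{\gamma_2}\varphi_{2,n})$ in \eqref{eq:weak.formulation}. The interface integrand becomes $(\varphi_{2,n}-\varphi_{1,n})(\gamma_{1,n}\varphi_{2,n}-\gamma_{1,n}\varphi_{1,n})$. Writing $r_n:=\gamma_{1,n}/\gamma_2$, this gives
$$
\gamma_{1,n}\int_\Sigma(\varphi_{2,n}-\varphi_{1,n})^2=\Lambda_n\int_{\Omega_1}\varphi_{1,n}^2-\int_{\Omega_1}\big(|\nabla\varphi_{1,n}|^2+c_1\varphi_{1,n}^2\big)+r_n\Big(\Lambda_n\int_{\Omega_2}\varphi_{2,n}^2-\int_{\Omega_2}\big(|\nabla\varphi_{2,n}|^2+c_2\varphi_{2,n}^2\big)\Big).
$$

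\textbf{Step 2 (the harmless terms).} The $\Omega_1$ contribution is bounded by $|\Lambda_n|+\|c_1\|_\infty$, using the normalization \eqref{eq:normalization}. Here $\{\Lambda_n\}$ is bounded: from above by Lemma~\ref{autovalores_acotados} or Corollary~\ref{corol:Lambda.less.D}, and from below by $-\max_i\|c_i\|_\infty$, since the Rayleigh quotient in \eqref{eq:Lambda.variational} is at least this value. If $\gamma_{1,n}\to0$, the $\Omega_2$ bracket is bounded as well and $r_n\to0$, so we are done. The real case is $\gamma_{1,n}\to\infty$.

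\textbf{Step 3 (the main obstacle).} The difficulty is the term multiplied by $r_n\to\infty$. By the Neumann variational characterization in $\Omega_2$,
$$
\int_{\Omega_2}\big(|\nabla\varphi_{2,n}|^2+c_2\varphi_{2,n}^2\big)\ge\sigma_N\int_{\Omega_2}\varphi_{2,n}^2,
$$
where $\sigma_N:=\sigma_1^{\Omega_2}[-\Delta+c_2;\mathcal{N}]$. Hence the bracket is at most $r_n(\Lambda_n-\sigma_N)\int_{\Omega_2}\varphi_{2,n}^2$. If $\Lambda_n\le\sigma_N$ this is nonpositive, and we are done.

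Otherwise, use \eqref{eq:cota.Lambda.autovalor} with a fixed regular extension $\varphi_1$ of the Neumann eigenfunction trace. The numbers $a_1,b_1$ in that bound are independent of $n$. The bound gives
$$
\Lambda_n-\sigma_N\le\frac{a_1-\sigma_N b_1}{b_1+r_n},
$$
and therefore
$$
r_n(\Lambda_n-\sigma_N)\le\frac{r_n}{b_1+r_n}\,|a_1-\sigma_N b_1|\le|a_1-\sigma_N b_1|.
$$
Combined with $\int_{\Omega_2}\varphi_{2,n}^2\le1$, this bounds the dangerous term by a constant. Collecting Steps 1--3 yields $C=\sup_n|\Lambda_n|+\|c_1\|_\infty+|a_1-\sigma_Nb_1|$, which is independent of $n$.
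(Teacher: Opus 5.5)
Your argument is correct, and it takes a different route from the paper.

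\textbf{The paper's route.} The paper never lets the large weight $\gamma_{1,n}/\gamma_2$ appear. It extends $\varphi_{2,n}$ into $\Omega_1$ and lifts the jump $\widetilde\varphi_{2,n}-\varphi_{1,n}$ to a harmonic function $e_n$ in $\Omega_1$, which is bounded in $H^1(\Omega_1)$ thanks to \eqref{eq:normalization}. It then tests \eqref{eq:weak.formulation} with $(-e_n,0)$. Because $\psi_2=0$, the interface term is exactly $\gamma_{1,n}\int_\Sigma(\varphi_{2,n}-\varphi_{1,n})^2$. Every other term is controlled by $\|e_n\|_{H^1(\Omega_1)}$, $\|\varphi_{1,n}\|_{H^1(\Omega_1)}$ and $\sup_n|\Lambda_n|$. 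So no spectral information is used beyond boundedness of $\Lambda_n$, and both regimes $\gamma_{1,n}\to0$ and $\gamma_{1,n}\to\infty$ are handled at once. The price is an $n$-dependent extension and lifting.

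\textbf{Your route.} Testing in the symmetrized form of Lemma~\ref{lemma:variational} deliberately lets the weight $r_n$ multiply the $\Omega_2$ Rayleigh defect. You neutralize it with spectral input: the Neumann Rayleigh inequality in $\Omega_2$, and the upper bound \eqref{eq:cota.Lambda.autovalor}, which you correctly turn into $\Lambda_n\le\sigma_N+O(\gamma_{1,n}^{-1})$. This needs only one fixed, $n$-independent extension of the Neumann eigenfunction's trace.

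\textbf{What your route buys.} Your identity carries more information than the lemma asks for. Rearranged, it reads
\[
\gamma_{1,n}\int_\Sigma(\varphi_{2,n}-\varphi_{1,n})^2+r_n\Big(\int_{\Omega_2}\big(|\nabla\varphi_{2,n}|^2+c_2\varphi_{2,n}^2\big)-\sigma_N\int_{\Omega_2}\varphi_{2,n}^2\Big)\le C,
\]
where both terms on the left are nonnegative. Hence the Neumann energy defect of $\varphi_{2,n}$ in $\Omega_2$ is $O(\gamma_{1,n}^{-1})$. By lower semicontinuity, this already forces any limit $\varphi_{2,\infty}$ to be a (possibly zero) multiple of the principal Neumann eigenfunction. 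That is exactly the dichotomy analysed case by case in Theorem~\ref{thm:gamma1.to.infty}.
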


\begin{proof}
    Let $U\subset\mathbb{R}^N$ be a bounded domain and consider, for $f\in L^2(U)$ and $g\in H^1(U)\cap C(\overline{D})$, the problem
    \begin{equation}
        \label{eq:levantamiento}
        \left\{\begin{array}{ll}
           -\Delta e=f,\qquad  & \text{ in } U,  \\
             e=g,& \text{ in } \partial U.
        \end{array}\right.
    \end{equation}
Consider also the space of test functions $\mathcal{K}:=\{v\in H^1(U) : v-g\in H^1_0(U)\}$. Under these conditions,  there exists a unique weak solution, $e\in \mathcal{K}$, of~\eqref{eq:levantamiento}
$$
\int_{D} \nabla e\cdot \nabla v=\int_{D} fv, \quad\text{for all }v\in\mathcal{K},
$$
that, moreover, verifies
    \begin{equation}
        \label{eq:levantamiento.cota}
\|e\|_{H^1(U)}\leq K(\|f\|_2+\|g\|_{H^1(U)}),
    \end{equation}
for some constant $K>0$, see~\cite[Proposition IX.22]{Brezis.book},~\cite[Theorem 8.3]{GilbargTrudinger.book}.

Now consider problem~\eqref{eq:levantamiento} with $f=0$ and $U=\Omega_1$ and $g=\widetilde{\varphi}_{2,n}-\varphi_{1,n}$, where $\widetilde{\varphi}_{2,n}$ is the extension to $\Omega_1$ of $\varphi_{2,n}$. Moreover, $\widetilde{\varphi}_{2,n}$ verifies
$$
\|\tilde{\varphi}_{2,n}\|_{H^1(\Omega)}\leq
\|\varphi_{2,n}\|_{H^1(\Omega_2)},
$$
see~\cite[Theorem 1, Section 5.4]{Evans}.
Then, from~\eqref{eq:levantamiento.cota}, we get
$$
\|e_n\|_{H^1(\Omega_1)}\leq K\|\tilde{\varphi}_{2,n}-\varphi_{1,n}\|_{H^1(\Omega_1)}\leq C(\|\varphi_{1,n}\|_{H^1(\Omega_1)}+\|{\varphi}_{2,n}\|_{H^1(\Omega_2)})=C,
$$
and the last inequality is true thanks to the normalization (\ref{eq:normalization}).

Going back the principal eigenfunction ${\bf \Phi}_n$, taking as test function
$\Psi=(-e_n,0)\in \mathcal{H}^1$ in~\eqref{eq:weak.formulation}  yields
$$
-\int_{\Omega_1} \nabla \varphi_{1,n}\cdot \nabla e_n + \gamma_{1,n} \int_{\Sigma} (\varphi_{2,n}-\varphi_{1,n})^2=\Lambda_n \int_{\Omega_1} \varphi_{1,n} e_n.
$$
Finally, since $e_n$ is bounded in $H^1(\Omega_1)$, we have that
$$
-\int_{\Omega_1} \nabla \varphi_{1,n}\cdot \nabla e_n,\quad \text{and}\quad \Lambda_n \int_{\Omega_1} \varphi_{1,n} e_n
$$
are bounded (independently of $n$). Hence
$$\gamma_{1,n} \int_{\Sigma} (\varphi_{2,n}-\varphi_{1,n})^2$$
is uniformly bounded, independent of $n$.
\end{proof}

\begin{corollary}
    \label{corol:L2.to0}
Let $\gamma_{1,n}\to+\infty$ as $n\to\infty$. Then
$$
\|\varphi_{2,n}-\varphi_{1,n}\|_{L^2(\Sigma)}
=O\left(\gamma_{1,n}^{-1/2}\right).
$$
In particular,
$$(\varphi_{2,n}-\varphi_{1,n})\to0
\qquad\text{strongly in }L^2(\Sigma)
\quad\text{as }n\to\infty.
$$
\end{corollary}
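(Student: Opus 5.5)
The corollary follows directly from Lemma~\ref{lemma:bound.gamma.L2}. That lemma gives a constant $C>0$, independent of $n$, with
$$
\gamma_{1,n}\int_\Sigma|\varphi_{2,n}-\varphi_{1,n}|^2\le C .
$$
Since $\gamma_{1,n}\to+\infty$, we have $\gamma_{1,n}>0$ for all $n$ large, so we may divide by it:
$$
\|\varphi_{2,n}-\varphi_{1,n}\|_{L^2(\Sigma)}^2\le \frac{C}{\gamma_{1,n}} .
$$
Taking square roots gives
$$
\|\varphi_{2,n}-\varphi_{1,n}\|_{L^2(\Sigma)}\le C^{1/2}\,\gamma_{1,n}^{-1/2},
$$
which is exactly the claimed $O(\gamma_{1,n}^{-1/2})$ estimate. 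Because $\gamma_{1,n}^{-1/2}\to0$, the strong convergence $(\varphi_{2,n}-\varphi_{1,n})\to0$ in $L^2(\Sigma)$ follows immediately.

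The traces on $\Sigma$ are well defined. By the normalization \eqref{eq:normalization}, both components are bounded in $H^1$ of their respective subdomains, so the trace theorem places their traces in $L^2(\Sigma)$. The only substantive input is that $C$ does not depend on $n$, which Lemma~\ref{lemma:bound.gamma.L2} already guarantees through the uniform $H^1$ bound on the lifting $e_n$ and the boundedness of $\{\Lambda_n\}$ from Lemma~\ref{autovalores_acotados}.

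No real obstacle remains at this stage. The only point to check is that the constant in Lemma~\ref{lemma:bound.gamma.L2} is uniform along the whole sequence and not only along a subsequence. This holds because every bound used there, the extension estimate, \eqref{eq:levantamiento.cota} and the eigenvalue bound, is uniform in $n$.
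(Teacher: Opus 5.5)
Your proposal is correct and follows essentially the same route as the paper: divide the uniform bound of Lemma~\ref{lemma:bound.gamma.L2} by $\gamma_{1,n}$ and take square roots to get $\|\varphi_{2,n}-\varphi_{1,n}\|_{L^2(\Sigma)}\le C^{1/2}\gamma_{1,n}^{-1/2}\to 0$. Your extra remarks on the traces and on the uniformity of $C$ in $n$ are consistent with what the lemma already provides.
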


\begin{proof}
By Lemma~\ref{lemma:bound.gamma.L2},
$$
\gamma_{1,n}
\|\varphi_{2,n}-\varphi_{1,n}\|_{L^2(\Sigma)}^2
\le C,
$$
where $C>0$ is independent of $n$. Hence,
$$
\|\varphi_{2,n}-\varphi_{1,n}\|_{L^2(\Sigma)}
\le C^{1/2}\gamma_{1,n}^{-1/2},
$$
which proves the assertion.
\end{proof}

\begin{lemma}
\label{lemma.convergence.phi}
There exists ${\bf \Phi}_\infty=(\varphi_{1,\infty},\varphi_{2,\infty})\in \mathcal{H}^1$ such that, up to a subsequence,
$$
\varphi_{i,n}\rightarrow \varphi_{i,\infty}
\quad\text{strongly in } H^1(\Omega_i),
$$
as $n\to\infty$. Moreover,
$$
\|\varphi_{1,\infty}\|_{H^1(\Omega_1)}
+\|\varphi_{2,\infty}\|_{H^1(\Omega_2)}=1.
$$
\end{lemma}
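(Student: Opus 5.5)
Because of the normalization \eqref{eq:normalization}, the sequence $\{{\bf \Phi}_n\}$ is bounded in $\mathcal{H}^1$. Up to a subsequence we therefore have $\varphi_{i,n}\rightharpoonup\varphi_{i,\infty}$ weakly in $H^1(\Omega_i)$, strongly in $L^2(\Omega_i)$, and, by compactness of the trace map $H^1(\Omega_i)\to L^2(\partial\Omega_i)$, strongly in $L^2(\Sigma)$. By Lemma~\ref{lemma:Lambda.convergence} we may also assume $\Lambda_n\to\Lambda_\infty$. Corollary~\ref{corol:L2.to0} then gives $\varphi_{1,\infty}=\varphi_{2,\infty}$ on $\Sigma$. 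The remaining task is to upgrade weak convergence to strong convergence. For that it suffices to show $\|\nabla\varphi_{i,n}\|_{L^2(\Omega_i)}\to\|\nabla\varphi_{i,\infty}\|_{L^2(\Omega_i)}$ for each $i$, or, more robustly, to show $\|\nabla(\varphi_{i,n}-\varphi_{i,\infty})\|_{L^2}\to0$ directly. Once strong convergence holds, the identity $\|\varphi_{1,\infty}\|_{H^1}+\|\varphi_{2,\infty}\|_{H^1}=1$ follows from passing to the limit in \eqref{eq:normalization}.

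The $\Omega_2$ component is easier, since its coupling coefficient $\gamma_2$ is fixed. I would test \eqref{eq:weak.formulation} with $\Psi=(0,\varphi_{2,n}-\varphi_{2,\infty})$, which gives
$$\int_{\Omega_2}\nabla\varphi_{2,n}\cdot\nabla(\varphi_{2,n}-\varphi_{2,\infty})=\Lambda_n\int_{\Omega_2}\varphi_{2,n}(\varphi_{2,n}-\varphi_{2,\infty})-\int_{\Omega_2}c_2\varphi_{2,n}(\varphi_{2,n}-\varphi_{2,\infty})-\gamma_2\int_\Sigma(\varphi_{2,n}-\varphi_{1,n})(\varphi_{2,n}-\varphi_{2,\infty}).$$
Every term on the right tends to $0$, by the strong $L^2(\Omega_2)$ and $L^2(\Sigma)$ convergences together with boundedness. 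Subtracting $\int\nabla\varphi_{2,\infty}\cdot\nabla(\varphi_{2,n}-\varphi_{2,\infty})\to0$, which holds by weak convergence, yields $\nabla\varphi_{2,n}\to\nabla\varphi_{2,\infty}$ in $L^2(\Omega_2)$.

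The $\Omega_1$ component is the main obstacle, because the boundary term carries the factor $\gamma_{1,n}\to\infty$. Testing naively with $(\varphi_{1,n}-\varphi_{1,\infty},0)$ produces $\gamma_{1,n}\int_\Sigma(\varphi_{2,n}-\varphi_{1,n})(\varphi_{1,n}-\varphi_{1,\infty})$. Lemma~\ref{lemma:bound.gamma.L2} controls only $\gamma_{1,n}^{1/2}\|\varphi_{2,n}-\varphi_{1,n}\|_{L^2(\Sigma)}$, so we would need $\gamma_{1,n}^{1/2}\|\varphi_{1,n}-\varphi_{1,\infty}\|_{L^2(\Sigma)}\to0$, which is not available. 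I would avoid this term by choosing a test function with zero trace on $\Sigma$. Let $E_n\in H^1(\Omega_1)$ be a bounded harmonic lifting, as in the proof of Lemma~\ref{lemma:bound.gamma.L2}, of the boundary datum $\varphi_{1,n}-\varphi_{1,\infty}$ on $\Sigma$. Since this datum tends to $0$ in $H^{1/2}(\Sigma)$ (this must be checked; see the last paragraph), we get $E_n\to0$ in $H^1(\Omega_1)$. Test \eqref{eq:weak.formulation} with $\Psi=(\varphi_{1,n}-\varphi_{1,\infty}-E_n,0)$, which lies in $H^1_0(\Omega_1)\times\{0\}$, so the interface integral vanishes. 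This gives
$$\int_{\Omega_1}\nabla\varphi_{1,n}\cdot\nabla(\varphi_{1,n}-\varphi_{1,\infty}-E_n)=\int_{\Omega_1}(\Lambda_n-c_1)\varphi_{1,n}(\varphi_{1,n}-\varphi_{1,\infty}-E_n)\to0.$$
Since $\int\nabla\varphi_{1,n}\cdot\nabla E_n\to0$, we conclude $\nabla\varphi_{1,n}\to\nabla\varphi_{1,\infty}$ in $L^2(\Omega_1)$.

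The delicate point is $E_n\to0$, which requires convergence of the traces $\varphi_{1,n}|_\Sigma$ in $H^{1/2}(\Sigma)$ rather than merely in $L^2(\Sigma)$. I would obtain this from uniform higher regularity. On $\Sigma$ we have $\varphi_{1,n}=\varphi_{2,n}-\gamma_{1,n}^{-1}\partial_\nu\varphi_{1,n}$, and $\varphi_{2,n}$ solves a Robin problem with fixed $\gamma_2$. Hence $\varphi_{2,n}$ is bounded in $H^{3/2+\epsilon}$ near $\Sigma$, provided its Robin datum $\varphi_{1,n}|_\Sigma$ is bounded in $H^{1/2}(\Sigma)$, which it is. This makes $\varphi_{2,n}|_\Sigma$ relatively compact in $H^{1/2}(\Sigma)$. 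Meanwhile, a uniform Robin-type estimate in $\Omega_1$ (bounds on $\varphi_{1,n}$ in $H^2$ uniform in large $\gamma_{1,n}$) makes the correction $\gamma_{1,n}^{-1}\partial_\nu\varphi_{1,n}\to0$ in $H^{1/2}(\Sigma)$. If this regularity route proves too heavy, my fallback is an energy-convergence argument: show that $A_n({\bf \Phi}_n,{\bf \Phi}_n)$ converges to the limiting energy $\int|\nabla\varphi_{1,\infty}|^2+\dots$ and use weak lower semicontinuity on each piece. This would still need $\gamma_{1,n}\int_\Sigma(\varphi_{2,n}-\varphi_{1,n})^2\to0$, which I would try to extract by testing with $(e_n,0)$ as in Lemma~\ref{lemma:bound.gamma.L2} and passing to the limit.
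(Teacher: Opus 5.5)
Your $\Omega_2$ argument is correct. It is lighter than the paper's, which goes through a $W^{2,2}(\Omega_2)$ bound and compact embedding. Your diagnosis of the $\Omega_1$ difficulty is also accurate. The paper tests with $\varphi_{1,n}-\varphi_{1,m}$, which is the naive route you reject. It controls the interface term by quoting $\gamma_{1,k}\|\varphi_{2,k}-\varphi_{1,k}\|_{L^2(\Sigma)}\le C$ from Lemma~\ref{lemma:bound.gamma.L2}. That lemma only bounds $\gamma_{1,k}\|\varphi_{2,k}-\varphi_{1,k}\|_{L^2(\Sigma)}^2$, so the paper's step is unjustified for exactly the reason you give.

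However, your replacement has a genuine gap at the point you flag. Everything hinges on $\varphi_{1,n}|_\Sigma\to\varphi_{1,\infty}|_\Sigma$ in $H^{1/2}(\Sigma)$. Given the $\Omega_2$ step, this is equivalent to $\|\varphi_{2,n}-\varphi_{1,n}\|_{H^{1/2}(\Sigma)}\to0$. You obtain it from an $H^2(\Omega_1)$ bound uniform as $\gamma_{1,n}\to\infty$, but that is not a standard estimate and you give no argument for it:

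- Written as a Neumann problem, the usual $H^2$ estimate needs a bound on $\|\partial_\nu\varphi_{1,n}\|_{H^{1/2}(\Sigma)}=\gamma_{1,n}\|\varphi_{2,n}-\varphi_{1,n}\|_{H^{1/2}(\Sigma)}$. This is far beyond the $\gamma_{1,n}^{1/2}$-weighted $L^2(\Sigma)$ control actually available.
- A singular-perturbation comparison with the Dirichlet problem would need uniform invertibility of the Dirichlet operator $-\Delta+c_1-\Lambda_n$ on $\Omega_1$. That degenerates when $\Lambda_n\to\sigma_1^{\Omega_1}[-\Delta+c_1;\mathcal{D}]$, a regime that does occur (cf.\ Theorem~\ref{thm:gamma1.to.infty}).

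Your fallback picks the right test function $(e_n,0)$. But it then faces $\int_{\Omega_1}\nabla\varphi_{1,n}\cdot\nabla e_n$, a product of two merely weakly convergent sequences, and you do not say how to pass to the limit.

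The missing idea is to combine harmonicity with the sign of the penalty term.

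Let $e_n$ and $G_n$ be harmonic in $\Omega_1$ with traces $\varphi_{2,n}-\varphi_{1,n}$ and $\varphi_{2,n}$ on $\Sigma$. Then $\varphi_{1,n}-(G_n-e_n)\in H^1_0(\Omega_1)$. Since $e_n$ is harmonic, this gives $\int_{\Omega_1}\nabla\varphi_{1,n}\cdot\nabla e_n=\int_{\Omega_1}\nabla G_n\cdot\nabla e_n-\int_{\Omega_1}|\nabla e_n|^2$. Testing \eqref{eq:weak.formulation} with $(e_n,0)$ therefore yields
\[
\gamma_{1,n}\int_\Sigma(\varphi_{2,n}-\varphi_{1,n})^2+\int_{\Omega_1}|\nabla e_n|^2=\int_{\Omega_1}\nabla G_n\cdot\nabla e_n+\int_{\Omega_1}(c_1-\Lambda_n)\varphi_{1,n}e_n .
\]
Now pass to the limit on the right-hand side:

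- Your $\Omega_2$ step gives strong convergence of $\varphi_{2,n}|_\Sigma$ in $H^{1/2}(\Sigma)$. Hence $G_n\to G_\infty$ strongly in $H^1(\Omega_1)$.
- $e_n\rightharpoonup0$ weakly in $H^1(\Omega_1)$, because it is bounded, harmonic, and its trace tends to $0$ in $L^2(\Sigma)$ by Corollary~\ref{corol:L2.to0}.

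So the right-hand side tends to $0$. Both terms on the left are nonnegative, so $e_n\to0$ in $H^1(\Omega_1)$.

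By the trace theorem, $\varphi_{1,n}|_\Sigma\to\varphi_{2,\infty}|_\Sigma=\varphi_{1,\infty}|_\Sigma$ in $H^{1/2}(\Sigma)$. This is exactly what your lifting $E_n$ needs, and no uniform regularity in $\Omega_1$ is required. As a byproduct you also get $\gamma_{1,n}\int_\Sigma(\varphi_{2,n}-\varphi_{1,n})^2\to0$.
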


\begin{remark}
\label{rem:not0}
 Note that the limit functions cannot be zero simultaneously: $
(\varphi_{1,\infty},\varphi_{2,\infty})\neq(0,0).
$
\end{remark}

\begin{proof}
Since
$
\|{\bf \Phi}_n\|_{\mathcal H^1}=1,
$
the sequence $\{{\bf \Phi}_n\}$ is bounded in $\mathcal H^1$. Hence, up to a
subsequence, there exists ${\bf \Phi}_\infty
=(\varphi_{1,\infty},\varphi_{2,\infty})\in\mathcal H^1
$
such that
$$
{\bf \Phi}_n\rightharpoonup{\bf \Phi}_\infty
\qquad\text{weakly in }\mathcal H^1.
$$
Moreover, by the compact embedding
$H^1(\Omega_i)\hookrightarrow L^2(\Omega_i)$ and the compactness of the
trace operator from $H^1(\Omega_i)$ into $L^2(\Sigma)$, we have
\begin{equation}
    \label{eq:strong.convergence}
    \varphi_{i,n}\rightarrow\varphi_{i,\infty}
\qquad\text{strongly in $L^2(\Omega_i)$ and in $L^2(\Sigma)$}.
\end{equation}

We first prove the strong convergence of $\{\varphi_{2,n}\}$ in
$H^1(\Omega_2)$. Recall that $\varphi_{2,n}$ satisfies the non-homogeneous Robin type problem
$$
\left\{
\begin{array}{ll}
(-\Delta+c_2(x))\varphi_{2,n}
   =\Lambda_n\varphi_{2,n},
   & \text{in }\Omega_2,\\[1mm]
\partial_{\nu_2}\varphi_{2,n}+\gamma_2\varphi_{2,n}
   =\gamma_2\varphi_{1,n},
   & \text{on }\Sigma,\\[1mm]
\partial_{\nu_2}\varphi_{2,n}=0,
   & \text{on }\Gamma.
\end{array}
\right.
$$
By standard elliptic regularity, see~\cite{Amann1993},
we have that there exists a constant $C>0$, independent of $n$, such
that
    $$
        \|\varphi_{2,n}\|_{W^{2,2}(\Omega_2)}\leq C\|\varphi_{2,n}-\varphi_{1,n}\|_{H^{1/2}(\Sigma)}\leq C(\|\varphi_{2,n}\|_{H^1}+\|\varphi_{1,n}\|_{H^{1}})\leq C.
    $$
Therefore, using the compact embedding
$
W^{2,2}(\Omega_2)\hookrightarrow H^1(\Omega_2),
$
we obtain, up to a subsequence, that there exists $\varphi_{2,\infty}\in H^1(\Omega_2)$, $\varphi_{2,\infty}\geq 0$, such that
$$
\varphi_{2,n}\rightarrow\varphi_{2,\infty}
\qquad\text{strongly in }H^1(\Omega_2).
$$

We now prove the strong convergence of $\{\varphi_{1,n}\}$ in
$H^1(\Omega_1)$. Since
$$
\varphi_{1,n}\rightarrow\varphi_{1,\infty}
\qquad\text{strongly in }L^2(\Omega_1),
$$
it is enough to prove that $\{\nabla\varphi_{1,n}\}$ is a Cauchy sequence
in $L^2(\Omega_1)$. Let $n,m\in\mathbb N$ and set
$$
w_{n,m}:=\varphi_{1,n}-\varphi_{1,m}.
$$
Subtracting the equations satisfied by $\varphi_{1,n}$ and
$\varphi_{1,m}$ and testing the resulting equation with $w_{n,m}$, we
obtain
$$
\begin{aligned}
\int_{\Omega_1}|\nabla w_{n,m}|^2
={}&
-\int_{\Omega_1}c_1(x)w_{n,m}^2
-\int_\Sigma
\Big[
\gamma_{1,m}(\varphi_{2,m}-\varphi_{1,m})
-\gamma_{1,n}(\varphi_{2,n}-\varphi_{1,n})
\Big]w_{n,m}
\\
&+\int_{\Omega_1}
\big(
\Lambda_n\varphi_{1,n}
-\Lambda_m\varphi_{1,m}
\big)w_{n,m}.
\end{aligned}
$$
We estimate the terms on the right-hand side separately. Since
$c_1\in L^\infty(\Omega_1)$ and
$w_{n,m}\to0$ in $L^2(\Omega_1)$ as $n,m\to\infty$,
$$
\left|
\int_{\Omega_1}c_1(x)w_{n,m}^2
\right|
\leq
\|c_1\|_{L^\infty(\Omega_1)}
\|w_{n,m}\|_{L^2(\Omega_1)}^2
\to 0.
$$

For the eigenvalue term, we have that
    $$
   \left| \int_{\Omega_1} (\Lambda_n \varphi_{1,n}-
    \Lambda_m\varphi_{1,m})w_{n,m}\right| \leq
 |\Lambda_n|
 \|w_{n,m}\|_{L^2(\Omega_1)}^2
 +
 |\Lambda_n-\Lambda_m|
 \|\varphi_{1,m}\|_{L^2(\Omega_1)}
 \|w_{n,m}\|_{L^2(\Omega_1)}\to 0
    $$
    since, on one hand $\{\varphi_{1,n}\}$ is bounded in
$L^2(\Omega_1)$  and on the other $\Lambda_n,\Lambda_m\to \Lambda_\infty $, by Lemma~\ref{lemma:Lambda.convergence}.

Finally, consider the interface term; then, due to Lemma~\ref{lemma:bound.gamma.L2},
$$
\gamma_{1,k}
\|\varphi_{2,k}-\varphi_{1,k}\|_{L^2(\Sigma)}
\leq C,
\qquad k=m,n.
$$
Thus, for $k=m,n$
$$
\left|
\int_\Sigma
\gamma_{1,k}(\varphi_{2,k}-\varphi_{1,k})
w_{n,m}
\right|
\leq
\gamma_{1,k}
\|\varphi_{2,k}-\varphi_{1,k}\|_{L^2(\Sigma)}
\|w_{n,m}\|_{L^2(\Sigma)}
\leq
C\|w_{n,m}\|_{L^2(\Sigma)}
\rightarrow0,
$$ because $\varphi_{1,k}\rightarrow\varphi_{1,\infty}$
strongly in $L^2(\Sigma)$, see~\eqref{eq:strong.convergence}.

Consequently, we conclude that
    $$
        \int_{\Omega_1} |\nabla \varphi_{1,n}-\nabla \varphi_{1,m}|^2 \to 0, \text{\qquad as\ }m, n\to\infty,
    $$
which together with
$$
\|\varphi_{1,n}-\varphi_{1,m}\|_{L^2(\Omega_1)}
\rightarrow0,
$$
shows that $\{\varphi_{1,n}\}$ is a Cauchy sequence in
$H^1(\Omega_1)$ and therefore
$$
\varphi_{1,n}\rightarrow\varphi_{1,\infty}
\qquad\text{strongly in }H^1(\Omega_1).
$$

Finally, passing to the limit in the normalization
\eqref{eq:normalization}, we obtain
$$
\|\varphi_{1,\infty}\|_{H^1(\Omega_1)}
+
\|\varphi_{2,\infty}\|_{H^1(\Omega_2)}
=1,
$$
and in particular, $(\varphi_{1,\infty},\varphi_{2,\infty})\neq(0,0).
$
\end{proof}

Now, to finish the characterization of the limit behavior when $\gamma_1\to\infty$, we have just to identify the limits $\Lambda_\infty$ and ${\bf \Phi}_\infty.$

\begin{proof}[Proof of Theorem~\ref{thm:gamma1.to.infty}]
As in the proof of Theorem~\ref{thm:gamma1.to.0}, we distinguish three cases according to the relation between the limit eigenvalues, $\sigma_1^{\Omega_1}[-\Delta+c_1;\mathcal{D}]$ and $\sigma_1^{\Omega_2}[ -\Delta+c_2;\mathcal{N}].$

\noindent{\sc Case 1}:
$\sigma_1^{\Omega_1}[-\Delta+c_1;\mathcal{D}]< \sigma_1^{\Omega_2}[ -\Delta+c_2;\mathcal{N}].$

Recall first that, by Corollary~\ref{corol:Lambda.less.D}, we have that
$$
\Lambda_1(\gamma_1)<\sigma_1^{\Omega_1}[-\Delta+c_1;\mathcal{D}], \qquad\forall \gamma_1>0.
 $$
 Then
\begin{equation}
    \label{primera}
\lim_{\gamma_1\to +\infty}\Lambda_1(\gamma_1)\leq \sigma_1^{\Omega_1}[-\Delta+c_1;\mathcal{D}]<\sigma_1^{\Omega_2}[ -\Delta+c_2;\mathcal{N}].
\end{equation}

First, we claim that $\varphi_{2,\infty}=0$ in $\Omega_2$. Indeed, assume by contradiction that $\varphi_{2,\infty}\not\equiv 0$. Then,  taking ${\bf\Psi}=(0,\psi_2)\in\mathcal{H}^1$ as test function in~\eqref{eq:weak.formulation}, we get
$$
    \int_{\Omega_2} \nabla \varphi_2 \cdot \nabla \psi_2 +
     \int_{\Omega_2} c_2(x) \varphi_2 \psi_2  +\gamma_2\int_{\Sigma} (\varphi_2-\varphi_1)\psi_2=\Lambda(\gamma_1) \int_{\Omega_2} \varphi_2 \psi_2
$$
We can pass to the limit, at least along a subsequence, using Corollary~\ref{corol:L2.to0} and Lemma~\ref{lemma.convergence.phi}, and get for all ${\bf \Psi}\in\mathcal{H}^1$
\begin{equation}
    \label{eq:limit.phi2}
    \int_{\Omega_2} \nabla \varphi_{2,\infty}\cdot \nabla \psi_2 +
     \int_{\Omega_2} c_2(x) \varphi_{2,\infty} \psi_2  =\Lambda_\infty \int_{\Omega_2} \varphi_{2,\infty} \psi_2,
\end{equation}
where $\Lambda_\infty$ is the limit of $\Lambda(\gamma_1)$, see Lemma~\ref{lemma:Lambda.convergence}. This implies that $\varphi_{2,\infty}$ is the weak solution of
\begin{equation}
    \label{eq:Neumann_prob}
\left\{\begin{array}{ll}
(-\Delta+c_2(x)) \varphi_{2,\infty} =\Lambda_\infty \varphi_{2,\infty},\quad   &\hbox{in } \Omega_2,\\
\partial_{\nu}\varphi_{2,\infty} =0,&\hbox{on } \partial\Omega_2,
 \end{array}\right.
\end{equation}
and hence $\Lambda_\infty=\sigma_1^{\Omega_2}[ -\Delta+c_2;\mathcal{N}]$, a contradiction with~\eqref{primera}.
So that, as claimed, $\varphi_{2,\infty}=0$ in $\Omega_2$ and $\varphi_{2,\infty}\in H^1(\Omega)$. Now, the trace theorem yields $\varphi_{2,\infty}=0$ on $\partial \Omega_2$ and in particular on $\Sigma$.

Therefore, due to Remark~\ref{rem:not0}, $\varphi_{1,\infty}\not\equiv 0$ in $\Omega_1$. Moreover, taking ${\bf\Psi}=(\psi_1,\psi_2)\in\mathcal{H}^1$ as test function in~\eqref{eq:weak.formulation}, with $\psi_1\in H_0^1(\Omega_1)$, and passing to the limit  we get,
$$\int_{\Omega_1} \nabla \varphi_{1,\infty} \cdot\nabla \psi_1 +
     \int_{\Omega_1} c_1(x) \varphi_{1,\infty} \psi_1
            +\gamma_2\int_{\Sigma} (\varphi_{2,\infty}-\varphi_{1,\infty})\psi_2=\Lambda_\infty \int_{\Omega_1} \varphi_{1,\infty} \psi_1.
  $$
Hence, since by~Corollary~\ref{corol:L2.to0} we have that $\varphi_{1,\infty}=\varphi_{2,\infty}=0$ a.e on $\Sigma$,
 by an analogous argument as the one done in~\eqref{eq:limit.phi2} we get that  $\varphi_{1,\infty}$ is the first eigenfunction of
\begin{equation}
\label{eq:dirchlet_prob}
\left\{\begin{array}{ll}
(-\Delta+c_1(x)) \varphi_{1,\infty} =\Lambda_\infty \varphi_{1,\infty}\quad   &\hbox{in } \Omega_1,\\
\varphi_{1,\infty} =0,&\hbox{on } \Sigma.
 \end{array}\right.
\end{equation}
Therefore, we conclude that $\Lambda_\infty=\sigma_1^{\Omega_1}[ -\Delta+c_1;\mathcal{D}]$ and ${\bf \Phi}_\infty=(\varphi_{1,\infty},0)$.

\noindent{\sc Case 2}:
$\sigma_1^{\Omega_2}[ -\Delta+c_2;\mathcal{N}]<\sigma_1^{\Omega_1}[-\Delta+c_1;\mathcal{D}].$

First, from~\eqref{eq:cota.Lambda.autovalor}  we have that
\begin{equation}
    \label{primerabis}
    \lim_{\gamma_1\to +\infty}\Lambda_1(\gamma_1)\leq\lim_{\gamma_1\to +\infty}\frac{a_1+\frac{\gamma_1}{\gamma_2}\sigma_1^{\Omega_2}[-\Delta +c_2,\mathcal{N}]}{b_1+\frac{\gamma_1}{\gamma_2}}\leq \s_1^{\Omega_2}[-\Delta +c_2,{\mathcal{N}}]
\end{equation}
and hence
$$
\Lambda_\infty\leq \sigma_1^{\Omega_2}[ -\Delta+c_2;\mathcal{N}]<\sigma_1^{\Omega_1}[-\Delta+c_1;\mathcal{D}].
$$

To analyze the limit behavior of the eigenfunctions, we claim in this case that   $\varphi_{2,\infty}\neq 0$ in $\Omega_2$. Indeed, if we assume that $\varphi_{2,\infty}=0$, then necessarily $\varphi_{1,\infty}\neq 0$. Testing now in~\eqref{eq:weak.formulation} with the function $(\psi_1,0)$, where  $\psi_1\in H_0^1(\Omega_1)$ and arguing as before, we conclude that $ \Lambda_\infty= \sigma_1^{\Omega_1}[-\Delta+c_1;\mathcal{D}]$, a contradiction.
Hence $\varphi_{2,\infty}\neq  0$ is the first eigenfunction of the Neumann uncoupled problem and $\Lambda_\infty=\sigma_1^{\Omega_2}[ -\Delta+c_2;\mathcal{N}]$ the corresponding eigenvalue.

To identify $\varphi_{1,\infty}$ we take again the test function $(\psi_1,0)$ with $\psi_1\in H_0^1(\Omega_1)$, which allows to get rid of the integral in $\Sigma$,
and pass to the limit to obtain
$$ \int_{\Omega_1} \nabla \varphi_{1,\infty}\cdot \nabla \psi_1 +
     \int_{\Omega_1} c_1(x) \varphi_{1,\infty} \psi_1  =\Lambda_\infty \int_{\Omega_1} \varphi_{1,\infty} \psi_1.
$$
Having that $\varphi_{1,\infty}=\varphi_{2,\infty}>0$ across the interface, this implies that $\varphi_{1,\infty}$ is a weak solution of
$$
\left\{\begin{array}{ll}
(-\Delta+c_1(x)-\Lambda_\infty) \varphi_{1} =0\quad   &\hbox{in } \Omega_1,\\
\varphi_1 =\varphi_{2,\infty},&\hbox{on } \Sigma,
 \end{array}\right.
$$
which exists, since $\sigma_1^{\Omega_1}[-\Delta+c_1-\Lambda_\infty;\mathcal{D}]=\sigma_1^{\Omega_1}[-\Delta+c_1;\mathcal{D}]-\Lambda_\infty>0$.

\noindent{\sc Case 3}:
$\sigma_1^{\Omega_2}[ -\Delta+c_2;\mathcal{N}]=\sigma_1^{\Omega_1}[-\Delta+c_1;\mathcal{D}].$

We first observe that thanks to Lemma\;\ref{autovalores_acotados} and Corollary\;\ref{corol:Lambda.less.D} we already know that $\Lambda_1(\gamma_1)\leq \sigma_1^{\Omega_2}[ -\Delta+c_2;\mathcal{N}]=\sigma_1^{\Omega_1}[-\Delta+c_1;\mathcal{D}]$. Now we are going to follow a similar argument as performed for the previous cases. Thus, we first claim that $
\varphi_{2,\infty}=0$. Arguing by contradiction we suppose the contrary, so that
$\varphi_{2,\infty}\not\equiv0$. Hence, passing to the limit in the
weak formulation with test functions of the form
${\bf\Psi}=(0,\psi_2)$, as in {\sc Case 1}, we obtain
\[
\int_{\Omega_2}
\nabla\varphi_{2,\infty}\cdot\nabla\psi_2
+
\int_{\Omega_2}
c_2(x)\varphi_{2,\infty}\psi_2
=
\Lambda_\infty
\int_{\Omega_2}\varphi_{2,\infty}\psi_2,
\qquad
\psi_2\in H^1(\Omega_2).
\]
Consequently, since $\varphi_{2,\infty}\geq0$ and
$\varphi_{2,\infty}\not\equiv0$, it follows that $\varphi_{2,\infty}$ is the associated principal eigenfunction, up to multiplicative constants, with the principal eigenvalue for the Neumann problem in $\Omega_2$ denoted by
\[
\Lambda_\infty
=
\sigma_1^{\Omega_2}[-\Delta+c_2;\mathcal N].
\]
{As a consequence, we also have $\Lambda_\infty = \sigma_1^{\Omega_1}[ -\Delta+c_1;\mathcal{D}]$. Moreover, since $\varphi_{2,\infty}$ is the principal eigenfunction of the homogeneous Neumann problem \eqref{eq:Neumann_prob}, i.e., $\varphi_{2,\infty}>0$ in $\Omega_2$, thanks to the Maximum Principle and Hopf's lemma if $\varphi_{2,\infty}$ reached the minimum of 0 on the boundary $\Sigma$ we would have $\partial_{\nu_2}\varphi_{2,\infty}<0$, contradicting the homogeneous boundary condition $\partial_{\nu_2}\varphi_{2,\infty}=0$. Thus, $\varphi_{2,\infty}>0$ on $\Sigma$.

On the other hand, by Corollary~\ref{corol:L2.to0}, the traces satisfy $\varphi_{1,\infty} = \varphi_{2,\infty} > 0$ a.e. on $\Sigma$. Next, taking test functions of the form $(\psi_1,0)$ with $\psi_1\in H_0^1(\Omega_1)$ and passing to the limit, we deduce that $\varphi_{1,\infty}$ satisfies the equation
\[
(-\Delta + c_1(x) - \Lambda_\infty)\varphi_{1,\infty} = 0 \quad \text{in } \Omega_1.
\]
Let $\varphi_\mathcal{D}> 0$ denote the principal eigenfunction associated with $\sigma_1^{\Omega_1}[ -\Delta+c_1;\mathcal{D}]$, satisfying $\varphi_\mathcal{D} = 0$ and $\partial_{\nu_1}\varphi_\mathcal{D} < 0$ on $\Sigma$ by Hopf's lemma. Since $\Lambda_\infty = \sigma_1^{\Omega_1}[ -\Delta+c_1;\mathcal{D}]$, integrating by parts gives
\[
0 = \int_{\Omega_1} \Big( \varphi_\mathcal{D} \Delta \varphi_{1,\infty} - \varphi_{1,\infty} \Delta \varphi_\mathcal{D} \Big) dx = - \int_{\Sigma} \varphi_{1,\infty} \partial_{\nu_1}\varphi_\mathcal{D} \, d\sigma.
\]
Since $\partial_{\nu_1}\varphi_\mathcal{D} < 0$ on $\Sigma$, this forces $\varphi_{1,\infty} = 0$ a.e. on $\Sigma$, which contradicts the fact that $\varphi_{1,\infty} = \varphi_{2,\infty} > 0$ a.e. on $\Sigma$. Therefore, we conclude that $\varphi_{2,\infty} \equiv 0$ in $\Omega_2$.}
\end{proof}

\subsection{A note on symmetric permeability}
Consider now the symmetric interface problem given by \begin{equation}
    \label{eq:system.symmetric}
\left\{
    \begin{array}
        {ll}
        -\Delta \varphi_i + c_i(x)
\varphi_i=\sigma \varphi_i&  \hbox{in } \Omega_i,\\
        \p_{\nu} \varphi_1=\p_{\nu} \varphi_2= \gamma(\varphi_2- \varphi_1),&\text{on\ }\Sigma,\\
        \p_{\nu} \varphi_2=0,&\text{on\ }\Gamma,
    \end{array}
\right.
\end{equation}
whose limit behavior has also been recently studied in~\cite{WangSu-preprint}.
As mentioned in the Introduction, in contrast with the non-symmetric case, when the permeability is  symmetric, the problem is self-adjoint, and the weak formulation reads just as:
\begin{equation}
 \label{eq:weak.formulation.symm}
\begin{aligned}
    \int_{\Omega_1} \nabla \varphi_1 \cdot\nabla \psi_1 &+ \int_{\Omega_2} \nabla \varphi_2 \cdot \nabla \psi_2 +
    \int_{\Omega_1} c_1(x) \varphi_1 \psi_1+
    \int_{\Omega_2} c_2(x) \varphi_2 \psi_2 \\& +\gamma\int_{\Sigma} (\varphi_2-\varphi_1)(\psi_2-\psi_1)
    =\Lambda \left(\int_{\Omega_1}\varphi_1 \psi_1 +\int_{\Omega_2} \varphi_2 \psi_2\right).
\end{aligned}
\end{equation}

\noindent{\bf Limit behavior when $\gamma\to 0$:}
We observe that the behavior is affected by the symmetry of the boundary condition on the membrane region, obtaining a different limit behavior from the asymmetric case, where such an asymmetric condition for the permeability parameters allowed the eigenfunctions to concentrate on both sizes of the membrane region, while in the symmetric case we face here this is not going to happen.

In getting this result, one can apply {the idea of the } proof performed in Theorem\;\ref{thm:gamma1.to.0} arriving at the stability of the eigenvalues $\Lambda_1(\gamma)$ and the associated eigenfunctions ${\bf \Phi}_{\gamma}$ of problem \eqref{eq:system.symmetric}, such that $\|{\bf \Phi}_{\gamma}\|_{\mathcal{L}^2}=1$.

{
\begin{theorem}
   \label{thm:symmetric.0}
Let  $\Lambda_1(\gamma)$ be the first eigenvalue of \eqref{eq:system.symmetric} and ${\bf \Phi}_{\gamma}$ the associated eigenfunction. Then
$$
\Lambda_0:= \lim_{\gamma\to 0}\Lambda(\gamma):=\min\{\sigma_1^{\Omega_1}[-\Delta+c_1;\mathcal{N}]; \sigma_1^{\Omega_2}[-\Delta+c_2;\mathcal{N}]\}.
$$
Furthermore, if ${\bf \Phi}_{\gamma}$ is such that $\|{\bf \Phi}_{\gamma}\|_{\mathcal{L}^2}=1$, then
$$
\lim_{\gamma\to 0}{\bf \Phi}_{\gamma}={\bf \Phi}_0\quad\text{in $\mathcal{H}^1$,}
$$
where
$$\Phi_0=
\left\{
\begin{array}{cc}
 (\varphi_{1,0},0)   & \text{if $\sigma_1^{\Omega_1}[-\Delta+c_1;\mathcal{N}]< \sigma_1^{\Omega_2}[-\Delta+c_2;\mathcal{N}]$, } \\
    (0,\varphi_{2,0})  & \text{if $\sigma_1^{\Omega_2}[-\Delta+c_2;\mathcal{N}]<\sigma_1^{\Omega_1}[-\Delta+c_1;\mathcal{N}]$, }
\end{array}\right.
$$
and $\varphi_{i,0}$ is the positive eigenfunction associated to  $\sigma_1^{\Omega_i}[-\Delta+c_i;\mathcal{N}]$, $i=1,2$ such that $\|\varphi_{i,0}\|_2=1$.
\end{theorem}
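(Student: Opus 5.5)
We will use the symmetric variational characterization \eqref{eq:eigenvalue.gamma}, normalize $\|{\bf \Phi}_\gamma\|_{\mathcal L^2}=1$, and write $\sigma_i:=\sigma_1^{\Omega_i}[-\Delta+c_i;\mathcal N]$.

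\emph{Upper bound.} Testing \eqref{eq:eigenvalue.gamma} with $(\varphi_{1,0},0)$ gives
$$
\Lambda_1(\gamma)\le \sigma_1+\gamma\int_\Sigma\varphi_{1,0}^2 ,
$$
and testing with $(0,\varphi_{2,0})$ gives $\Lambda_1(\gamma)\le\sigma_2+\gamma\int_\Sigma\varphi_{2,0}^2$. Hence $\limsup_{\gamma\to0}\Lambda_1(\gamma)\le\min\{\sigma_1,\sigma_2\}$.

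\emph{Lower bound.} Dropping the nonnegative interface term in the Rayleigh quotient yields, for any ${\bf\Phi}$,
$$
A\ge \sigma_1\|\varphi_1\|_2^2+\sigma_2\|\varphi_2\|_2^2\ge\min\{\sigma_1,\sigma_2\}.
$$
Together with the upper bound, $\Lambda_1(\gamma)\to\Lambda_0=\min\{\sigma_1,\sigma_2\}$. By Lemma~\ref{lemma:cont.monotone} the map is nondecreasing, so this is a genuine limit and not only along subsequences.

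\emph{Compactness.} Testing \eqref{eq:weak.formulation.symm} with ${\bf\Phi}_\gamma$ gives
$$
\|\nabla\varphi_{1,\gamma}\|_2^2+\|\nabla\varphi_{2,\gamma}\|_2^2+\gamma\int_\Sigma(\varphi_{2,\gamma}-\varphi_{1,\gamma})^2\le \Lambda_1(\gamma)+\max_i\|c_i\|_\infty ,
$$
so ${\bf\Phi}_\gamma$ is bounded in $\mathcal H^1$. Along a subsequence it converges weakly in $\mathcal H^1$, strongly in $\mathcal L^2$, and strongly in $L^2(\Sigma)$ for the traces, to some ${\bf\Phi}_0\ge0$ with $\|{\bf\Phi}_0\|_{\mathcal L^2}=1$. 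Testing with $(\psi_1,0)$ and $(0,\psi_2)$, the interface term is $\gamma$ times a bounded quantity and vanishes. Each $\varphi_{i,0}$ therefore satisfies the Neumann problem in $\Omega_i$ with eigenvalue $\Lambda_0$.

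\emph{Identification of the limit.} If $\sigma_1<\sigma_2$, then $\Lambda_0=\sigma_1<\sigma_2$. A nonnegative nontrivial $\varphi_{2,0}$ would be a principal Neumann eigenfunction with eigenvalue $\Lambda_0\neq\sigma_2$, since a nonnegative eigenfunction must correspond to the principal eigenvalue. Hence $\varphi_{2,0}=0$, and then $\|\varphi_{1,0}\|_2=1$ forces $\varphi_{1,0}$ to be the normalized principal eigenfunction. The case $\sigma_2<\sigma_1$ is symmetric. The limit is unique, so the whole family converges.

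\emph{Upgrading to strong $\mathcal H^1$ convergence.} This is the main technical point. We compare energies: the energy equals $\Lambda_1(\gamma)\to\Lambda_0$, which is the energy of ${\bf\Phi}_0$. The terms $\int c_i\varphi_i^2$ converge by strong $L^2$ convergence, and the interface term is nonnegative, so
$$
\limsup\big(\|\nabla\varphi_{1,\gamma}\|_2^2+\|\nabla\varphi_{2,\gamma}\|_2^2\big)\le \|\nabla\varphi_{1,0}\|_2^2+\|\nabla\varphi_{2,0}\|_2^2 .
$$
Weak lower semicontinuity gives the reverse inequality, so the gradient norms converge. Combined with weak convergence this yields strong convergence in $\mathcal H^1$. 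It also shows $\gamma\int_\Sigma(\varphi_{2,\gamma}-\varphi_{1,\gamma})^2\to0$.

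The theorem does not state the tie case $\sigma_1=\sigma_2$. We do not treat it: there the limit can be a combination of both eigenfunctions, and identifying it would require a finer argument.
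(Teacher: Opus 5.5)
Your proof is correct. The eigenfunction identification (compactness, testing with $(\psi_1,0)$ and $(0,\psi_2)$, and ruling out a nonnegative Neumann eigenfunction at a non-principal level) is essentially the paper's. The eigenvalue limit and the mode of convergence are handled differently.

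The paper gets existence of $\Lambda_0$ from the monotonicity in Lemma~\ref{lemma:cont.monotone}. It gets the upper bound $\Lambda_0\le\min\{\sigma_1,\sigma_2\}$ (your $\sigma_i$) from the strict comparison with the Robin eigenvalues in Lemma~\ref{autovalores_acotados}, letting $\gamma\to0$. The matching lower bound comes only a posteriori: the nonvanishing component of the limit is a nonnegative Neumann eigenfunction, which forces $\Lambda_0$ to equal the corresponding $\sigma_i$. The tie case $\sigma_1=\sigma_2$ is dispatched by a short contradiction.

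Your two-sided Rayleigh-quotient squeeze pins down $\Lambda_0$ directly, tie case included. You use explicit test functions for the upper bound, and for the lower bound you drop the nonnegative interface term and use the Neumann characterization on each $\Omega_i$. This needs no monotonicity, no supersolution argument behind Lemma~\ref{autovalores_acotados}, and no Robin-to-Neumann convergence, so your appeal to monotonicity is actually superfluous.

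Your energy-comparison step is a genuine addition. The paper's proof only produces weak $\mathcal{H}^1$ and strong $\mathcal{L}^2$ convergence, although the statement claims convergence in $\mathcal{H}^1$. Your argument supplies exactly this, and also yields $\gamma\int_\Sigma(\varphi_{2,\gamma}-\varphi_{1,\gamma})^2\to0$.

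What the paper's route buys is uniformity with the asymmetric Theorem~\ref{thm:gamma1.to.0}. There the weight $\gamma_1/\gamma_2$ in the quotient degenerates as $\gamma_1\to0$, and the limit involves the Robin eigenvalue $\sigma_1^{\Omega_2}[-\Delta+c_2;\mathcal{N}+\gamma_2]$, so a squeeze against the two Neumann eigenvalues would not be sharp. Your route exploits the fact that, in the symmetric case, the limit problem is exactly the decoupled pair of Neumann problems.
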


\begin{proof}
First, recall from Lemma~\ref{lemma:cont.monotone}, that $\Lambda(\gamma)$ is continuous and non-decreasing with $\gamma$. So, $\Lambda_0$ exists and by Lemma~\ref{autovalores_acotados} we have
$$
\Lambda_0\leq \min\{\sigma_1^{\Omega_1}[-\Delta+c_1;\mathcal{N}], \sigma_1^{\Omega_2}[-\Delta+c_2;\mathcal{N}]\}.
$$

 Moreover,  taking ${\bf \Phi}_\gamma=(\varphi_{1,\gamma},\varphi_{2,\gamma})$, normalized in $\mathcal{L}^2$,
as test function in~\eqref{eq:weak.formulation.symm} we get that
\begin{equation}
\label{cotanece}
\int_{\Omega_1} |\nabla \varphi_{1,\gamma}|^2 + \int_{\Omega_2} |\nabla \varphi_{2,\gamma}|^2 +
   \int_{\Omega_1} c_1(x) \varphi_{1,\gamma}^2 +    \int_{\Omega_2} c_2(x) \varphi_{2,\gamma}^2 +\gamma\int_{\Sigma} (\varphi_{2,\gamma}-\varphi_{1,\gamma})^2=\Lambda_1(\gamma).
\end{equation}
Hence, $\varphi_{i,\gamma}$ is bounded in $H^1(\Omega_i)$,  and there exists ${\bf \Phi}_0=(\varphi_{1,0},\varphi_{2,0})\in \mathcal{H}^1$, $\varphi_{i,0}\geq 0$ such that $\|{\bf \Phi}_0\|_{\mathcal{L}^2}=1$ and
\begin{equation}
   \varphi_{i,\gamma}\rightharpoonup \varphi_{i,0}\quad  \text{in $H^1(\O_i)$,}\qquad\text{and}\qquad
\varphi_{i,\gamma}\to \varphi_{i,0} \quad\text{in $L^2(\O_i)$ and $L^2(\Sigma)$}.
\end{equation}

Assume without loss of generality that $\sigma_1^{\Omega_1}[-\Delta+c_1;\mathcal{N}]<\sigma_1^{\Omega_2}[-\Delta+c_2;\mathcal{N}]$
and that by contradiction $\varphi_{2,0}\not\equiv 0$ in $\Omega_2$. Then,  taking ${\bf\Psi}=(0,\psi_2)\in\mathcal{H}^1$ as test function in the weak formulation~\eqref{eq:weak.formulation.symm}
we get
$$
   \int_{\Omega_2} \nabla \varphi_2 \cdot\nabla \psi_2 +
    \int_{\Omega_2} c_2(x) \varphi_2 \psi_2  +\gamma\int_{\Sigma} (\varphi_2-\varphi_1)\psi_2=\Lambda(\gamma) \int_{\Omega_2} \varphi_2 \psi_2.
$$
We can pass to the limit as $\gamma\to 0$ and use that
$(\varphi_2-\varphi_1)$ is bounded to get
and get for all $\Psi\in\mathcal{H}^1$
$$
   \int_{\Omega_2} \nabla \varphi_{2,0} \cdot \nabla \psi_2 +
    \int_{\Omega_2} c_2(x) \varphi_{2,0} \psi_2  =\Lambda_0 \int_{\Omega_2}
    \varphi_{2,0} \psi_2.
$$
This implies that $\varphi_{2,0}$ is the weak solution of
$$
\left\{\begin{array}{ll}
(-\Delta+c_2(x)) \varphi_{2,0} =\Lambda_0 \varphi_{2,0},\quad   &\hbox{in } \Omega_2,\\
\partial_{\nu}\varphi_{2

} =0,&\hbox{on } \partial\Omega_2,
\end{array}\right.
$$
and hence $\Lambda_0=\sigma_1^{\Omega_2}[ -\Delta+c_2;\mathcal{N}]$, a contradiction. Hence $\varphi_{2,0}=0$ in $\Omega_2$ and hence $\varphi_{1,0}\neq 0$ in $\Omega_1$.
Take now ${\bf\Psi}=(\psi_1, 0)\in\mathcal{H}^1$. Passing to the limit, we conclude that
$$
   \int_{\Omega_1} \nabla \varphi_{1,0} \cdot\nabla \psi_1 +
    \int_{\Omega_1} c_1(x) \varphi_{1,0} \psi_1  =\Lambda_0 \int_{\Omega_1} \varphi_{1,0} \psi_1,
$$
and hence $\Lambda_0=\sigma_1^{\Omega_1}[ -\Delta+c_1;\mathcal{N}]$.

In the case in which $\sigma_1^{\Omega_1}[ -\Delta+c_1;\mathcal{N}]=\sigma_1^{\Omega_2}[ -\Delta+c_2;\mathcal{N}]$, assuming that $\Lambda_0< \sigma_1^{\Omega_1}[ -\Delta+c_1;\mathcal{N}]=\sigma_1^{\Omega_2}[ -\Delta+c_2;\mathcal{N}]$ we arrive at a contradiction  with the fact that either $\varphi_{1,0}$ or $\varphi_{2,0}$ are different from zero.
\end{proof}

\noindent{\bf Limit behavior when $\gamma\to \infty$:} On the other hand, the behavior of the eigenvalue when $\gamma\to\infty$ has been studied in~\cite{WangSu} using the variational definition of the principal eigenvalue. We include the result here, providing a different proof, for the reader's convenience.

\begin{theorem}
\label{thm:symmetric.infty}
Let $\Lambda_1(\gamma)$ be the principal eigenvalue of~\eqref{eq:system.symmetric} and ${\bf \Phi}_\g$ the associated positive eigenfunction.
Then,
 $$
\Lambda_\infty:=\lim_{\gamma\to \infty}\Lambda_1(\gamma)=\sigma_1^\Omega[-\Delta+{ c},\mathcal{N}],$$
where
\begin{equation}
\label{defic}
c(x)=\left\{\begin{array}{ll}
c_1(x) & \text{in $\Omega_1$,}\\
c_2(x) & \text{in $\Omega_2$.}\\
\end{array}\right.
\end{equation}
Furthermore, if  ${\bf \Phi}_\g$ is normalized such that $\|{\bf \Phi}_\gamma\|_{\mathcal{L}^2}=1$, then
$$
        \lim_{\gamma\to \infty}{\bf \Phi_\gamma}={\bf \Phi}_\infty,\quad \text{in $H^1(\Omega)$}
$$
where ${\bf \Phi}_\infty\in H^1(\Omega)$ is the eigenfunction associated with $\Lambda_\infty$.
\end{theorem}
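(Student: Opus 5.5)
We prove Theorem~\ref{thm:symmetric.infty} by a direct variational argument built on the symmetric Rayleigh quotient~\eqref{eq:eigenvalue.gamma}, combined with the compactness obtained from Lemma~\ref{lemma:bound.gamma.L2} and Corollary~\ref{corol:L2.to0}.

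\emph{Existence of the limit and the upper bound.} By Lemma~\ref{lemma:cont.monotone}, $\gamma\mapsto\Lambda_1(\gamma)$ is non-decreasing. For an upper bound, take any $u\in H^1(\Omega)$ and insert $(u|_{\Omega_1},u|_{\Omega_2})$ into~\eqref{eq:eigenvalue.gamma}. The traces from both sides coincide on $\Sigma$, so the interface term vanishes. Therefore $\Lambda_1(\gamma)\le\sigma_1^\Omega[-\Delta+c,\mathcal{N}]$ for every $\gamma$, and in particular for the piecewise potential~\eqref{defic}. Hence $\Lambda_\infty:=\lim_{\gamma\to\infty}\Lambda_1(\gamma)$ exists and
\[
\Lambda_\infty\le\sigma_1^\Omega[-\Delta+c,\mathcal{N}].
\]

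\emph{Compactness.} Normalize $\|{\bf \Phi}_\gamma\|_{\mathcal{L}^2}=1$ and test the weak formulation~\eqref{eq:weak.formulation.symm} with ${\bf \Phi}_\gamma$ itself, exactly as in~\eqref{cotanece}. This gives
\[
\sum_{i=1,2}\int_{\Omega_i}|\nabla\varphi_{i,\gamma}|^2+\gamma\int_\Sigma(\varphi_{2,\gamma}-\varphi_{1,\gamma})^2=\Lambda_1(\gamma)-\sum_{i=1,2}\int_{\Omega_i}c_i\varphi_{i,\gamma}^2\le C,
\]
with $C$ independent of $\gamma$. Thus ${\bf \Phi}_\gamma$ is bounded in $\mathcal{H}^1$, and the jump satisfies $\|\varphi_{2,\gamma}-\varphi_{1,\gamma}\|_{L^2(\Sigma)}^2\le C/\gamma\to0$. (This is the symmetric analogue of Corollary~\ref{corol:L2.to0}, and here it is immediate.) Along a subsequence $\varphi_{i,\gamma}\rightharpoonup\varphi_{i,\infty}$ weakly in $H^1(\Omega_i)$, strongly in $L^2(\Omega_i)$ and in $L^2(\Sigma)$. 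In the limit the traces agree on $\Sigma$, so the glued function $\Phi_\infty$ lies in $H^1(\Omega)$, with the distributional gradient equal to the piecewise one. Moreover $\|\Phi_\infty\|_{L^2(\Omega)}=1$ and $\Phi_\infty\ge0$.

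\emph{Identifying the limit.} This step is the main point. Take a test function ${\bf \Psi}=(\psi|_{\Omega_1},\psi|_{\Omega_2})$ with $\psi\in H^1(\Omega)$. Then $\psi_2-\psi_1=0$ on $\Sigma$, so the term $\gamma\int_\Sigma(\cdots)(\psi_2-\psi_1)$ vanishes identically, and the unbounded factor $\gamma$ never has to be controlled. Passing to the limit in~\eqref{eq:weak.formulation.symm} yields
\[
\int_\Omega\nabla\Phi_\infty\cdot\nabla\psi+\int_\Omega c\,\Phi_\infty\psi=\Lambda_\infty\int_\Omega\Phi_\infty\psi,\qquad \psi\in H^1(\Omega).
\]
So $\Phi_\infty$ is a nonnegative, nontrivial Neumann eigenfunction of $-\Delta+c$ on $\Omega$. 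Since only the principal eigenfunction has a sign, $\Lambda_\infty=\sigma_1^\Omega[-\Delta+c,\mathcal{N}]$. By simplicity and the normalization, the limit $\Phi_\infty$ is unique, so the whole family converges, not just a subsequence.

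\emph{Strong convergence in $H^1$.} Use weak lower semicontinuity. Dropping the nonnegative interface term in the energy identity above and letting $\gamma\to\infty$, the $L^2$ terms converge, so
\[
\limsup_{\gamma\to\infty}\sum_{i=1,2}\int_{\Omega_i}|\nabla\varphi_{i,\gamma}|^2\le\Lambda_\infty-\int_\Omega c\,\Phi_\infty^2=\int_\Omega|\nabla\Phi_\infty|^2.
\]
Combined with weak convergence, this gives convergence of the norms, hence strong convergence in $\mathcal{H}^1$, that is, in $H^1(\Omega)$ for the glued functions. The same inequality also shows $\gamma\int_\Sigma(\varphi_{2,\gamma}-\varphi_{1,\gamma})^2\to0$.

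The only delicate points are the gluing of the two limits into a single $H^1(\Omega)$ function, which uses matching traces across the Lipschitz interface $\Sigma$, and the choice of continuous test functions, which removes the $\gamma$-term.
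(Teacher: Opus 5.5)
Your proof is correct. Its core coincides with the paper's: $\Lambda_\infty$ exists by monotonicity in $\gamma$, the jump on $\Sigma$ vanishes so the limit glues into an element of $H^1(\Omega)$, and the limit is identified by testing with ${\bf\Psi}=(\psi|_{\Omega_1},\psi|_{\Omega_2})$, $\psi\in H^1(\Omega)$, which kills the $\gamma$-term identically.

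The difference is in how compactness and strong convergence are obtained. The paper does not derive these afresh. It appeals to Lemma~\ref{lemma.convergence.phi} and the jump estimate of Corollary~\ref{corol:L2.to0}, i.e.\ to machinery built for the asymmetric problem: the lifting argument of Lemma~\ref{lemma:bound.gamma.L2}, a $W^{2,2}$ estimate for $\varphi_{2,n}$, and a Cauchy argument for $\nabla\varphi_{1,n}$. That machinery is needed there because testing with the eigenfunction produces $\int_\Sigma(\varphi_2-\varphi_1)(\gamma_2\varphi_2-\gamma_1\varphi_1)$, a term without sign.

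You instead exploit the symmetry. Testing with ${\bf\Phi}_\gamma$ gives the nonnegative term $\gamma\int_\Sigma(\varphi_{2,\gamma}-\varphi_{1,\gamma})^2$. So the $\mathcal{H}^1$ bound and the $O(\gamma^{-1/2})$ jump estimate follow directly from the energy identity, combined with your explicit upper bound $\Lambda_1(\gamma)\le\sigma_1^\Omega[-\Delta+c;\mathcal N]$, which the paper only uses implicitly as ``bounded''. The same identity plus weak lower semicontinuity then gives convergence of the gradient norms, hence strong $\mathcal{H}^1$ convergence, and also $\gamma\int_\Sigma(\varphi_{2,\gamma}-\varphi_{1,\gamma})^2\to0$.

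This makes your proof self-contained and more robust than the paper's citation, because Lemma~\ref{lemma.convergence.phi} does not transfer verbatim to the symmetric setting. Its $W^{2,2}$ bound for $\varphi_{2,n}$ relies on $\gamma_2$ staying fixed, whereas here $\gamma_2=\gamma\to\infty$. Its interface estimate invokes $\gamma_{1,k}\|\varphi_{2,k}-\varphi_{1,k}\|_{L^2(\Sigma)}\le C$, which is stronger than the bound $\gamma_{1,k}\|\varphi_{2,k}-\varphi_{1,k}\|_{L^2(\Sigma)}^2\le C$ that Lemma~\ref{lemma:bound.gamma.L2} actually provides. You also state explicitly that simplicity of the principal Neumann eigenfunction upgrades subsequential convergence to convergence of the whole family, which the paper leaves unstated.
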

\begin{proof}
 Unlike what happens for the asymmetric case, $\Lambda(\gamma)$ is non-decreasing and bounded, so that it
implies that the limit $\Lambda_\infty$ exists. Moreover, thanks to Lemma\;\ref{lemma.convergence.phi} we have there exists also a limit function ${\bf \Phi}_\infty$, such that $\varphi_{i,\infty}\in H^1(\Omega_i)$ with  $\|{\bf \Phi}_\infty\|_{L^2(\Omega)}=1$ so that
\begin{equation}
    \varphi_{i,\gamma}\rightharpoonup \varphi_{i,\infty}\quad  \text{in $H^1(\O_i)$,}\quad
\varphi_{i,\gamma}\to \varphi_{i,\infty} \quad\text{in $L^2(\O_i)$ and $L^2(\Sigma)$}\quad\text{and}\quad \varphi_{1,\gamma}-\varphi_{2,\gamma}\to 0\quad\text{in  $L^2(\Sigma)$.}
\end{equation}
Hence, ${\bf \Phi}_\infty\in H^1(\Omega)$.
Take now $\Psi\in H^1(\Omega)$. Thus, passing to the limit, it yields
$$
\begin{aligned}
    \int_{\Omega_1} \nabla \varphi_{1,\infty}\cdot \nabla \psi_1 + \int_{\Omega_2} \nabla \varphi_{2,\infty}\cdot \nabla \psi_2 &+
    \int_{\Omega_1} c_1(x) \varphi_{1,\infty} \psi_1
    \int_{\Omega_2} c_2(x) \varphi_{2,\infty} \psi_2 \\
    &=\Lambda_\infty \left(\int_{\Omega_1}\varphi_{1,\infty} \psi_1 +\int_{\Omega_2} \varphi_{1,\infty} \psi_2\right).
\end{aligned}
    $$
where $\psi_i=\Psi_{\rfloor\Omega_i}$, $i=1,2$. This  implies that ${\bf \Phi}_\infty$ is a positive  eigenfunction of
$$
     \left\{\begin{array}
            {ll}
            -\Delta {\bf \Phi}+c(x){\bf \Phi}=\lambda {\bf \Phi},\qquad&\text{in\ }\Omega,\\
            \partial_{\nu} {\bf\Phi}=0,&\text{on\ }\Gamma,
        \end{array}\right.
$$
Therefore, by definition, $\Lambda_\infty:=\sigma_1^{\Omega}[-\Delta+c; \mathcal{N}]$.
\end{proof}


\end{document}